\newtheorem{theorem}{Theorem}[section]
\newtheorem{lemma}[theorem]{Lemma}
\theoremstyle{definition}
\theoremstyle{remark}
\numberwithin{equation}{section}
\begin{document}
	
\title[Hankel determinant]{Coefficient problems for certain Close-to-Convex Functions}
	\thanks{The first author is supported by Delhi Technological University, New Delhi}
	\author[Mridula Mundalia]{Mridula Mundalia}
	\address{Department of Applied Mathematics, Delhi Technological University, Delhi--110042, India}
	\email{mridulamundalia@yahoo.co.in}
	\author{S. Sivaprasad Kumar}
	\address{Department of Applied Mathematics, Delhi Technological University, Delhi--110042, India}
	\email{spkumar@dce.ac.in}

	\subjclass[2010]{30C45, 30C80}
	
	\keywords{Univalent functions, Starlike functions, Radius problems, Differential subordination}
	\begin{abstract}
	In this paper, sharp bounds are established for the second Hankel determinant of logarithmic coefficients for normalised analytic functions satisfying certain differential inequality.  
	\end{abstract}
	
	\maketitle
	
	\section{Introduction}
Let $\mathcal{A}$ be the class of all analytic functions defined on the open unit disc $\mathbb{D}:=\{z\in\mathbb{C}:|z|<1\}$ with the Taylor series expansion of the form     $f(z)=z+\sum_{n=2}^{\infty}a_{n}z^{n}.$
Assume $\mathcal{S}\subset \mathcal{A}$ to be the class of univalent functions defined on $\mathbb{D}.$ A function $f\in\mathcal{S},$ lies in $\mathcal{S}^{*}$ if the domain $f(\mathbb{D})$ is starlike w.r.t origin. A function $f\in\mathcal{A}$ belongs to the class of close-to-convex functions $\mathcal{K}$  \cite{Kaplan(1952)}, if there exists $g\in\mathcal{S}^{*},$ such that $\operatorname{Re}(e^{i\beta}zf'(z)/g(z))>0$ for $z\in\mathbb{D}$ and $\beta\in(-\pi/2,\pi/2).$ Note that $\mathcal{S}^{*}\subset\mathcal{K}\subset\mathcal{S}.$ Moreover, for $\beta=0$ and specific choices of $g(z),$ namely $g(z)=1/(1-z),1/(1-z^{2}),1/(1-z+z^{2})$ and $1/(1-z)^{2},$ we obtain some special subclasses of close-to-convex functions $\mathcal{F}_{i}$ $(i=1,\ldots,4),$ defined as 
\begin{equation}\label{EQN40}
\begin{aligned}
   & \mathcal{F}_{1}:=\left\{f\in\mathcal{A}:\operatorname{Re} (1-z)f'(z)>0\right\}\\&
     \mathcal{F}_{2}:=\left\{f\in\mathcal{A}:\operatorname{Re}(1-z^{2})f'(z)>0\right\}\\&
          \mathcal{F}_{3}:=\left\{f\in\mathcal{A}:\operatorname{Re} (1-z+z^{2})f'(z)>0\right\}\\&
        \mathcal{F}_{4}:=\left\{f\in\mathcal{A}:\operatorname{Re} (1-z)^{2}f'(z)>0\right\},
\end{aligned}
\end{equation}
where $z\in\mathbb{D}.$
A function $f\in\mathcal{S}^{*}_{s}$ if for any $r<1$ sufficiently close to 1, and any $\gamma$ lying on the circle $|z|=r$, the angular velocity of $f(z)$ about the point $f(-\gamma)$ is positive at $\gamma$ as $z$ traverses the circle $|z|=r$ in the positive direction, i.e $\operatorname{Re}(2 z f'(z)/(f(z)-f(-\gamma)))>0$ for $|z|=r$ at  $z=\gamma.$ In 1959, Sakaguchi \cite{Sakaguchi(1959)} introduced and examined the class $\mathcal{S}^{*}_{s},$  consisting of functions starlike with respect to symmetric points, characterised by  
\begin{equation}\label{EQN51}
    \operatorname{Re} \left( \frac{2z f'(z)}{f(z) - f(-z)}\right) >0 \quad (z\in\mathbb{D}).
\end{equation} 
Note that Sakaguchi \cite{Sakaguchi(1959)} proved that $\mathcal{S}^{*}_{s}\subset\mathcal{K}.$ 
 The bounds of $\gamma_{n}$ for functions in $\mathcal{K}$ were examined in \cite[p. 116]{D.K. Thomas(2018)}, \cite{Thomas(2016)}. In 2018, Kumar and Vasudevrao \cite{Pranav Kumar &  Vasudevarao(2018)} obtained bounds on early logarithmic coefficients for the subclasses $\mathcal{F}_{1}, \mathcal{F}_{2}, \mathcal{F}_{3}$ of $\mathcal{K}.$ Infact in \cite{Ali & Vasudevrao(2018)} the estimates of logarithmic coefficients of class $\mathcal{F}_{4}\subset\mathcal{S}$ was discussed. Note that the conditions mentioned in classes $\mathcal{F}_{1},\mathcal{F}_{2}$ and $\mathcal{F}_{4}$ were considered by Ozaki \cite{Ozaki(1941)} as an important criteria of univalence. Moreover, it is important to mention that, the classes $\mathcal{F}_{2}$ and $\mathcal{F}_{4}$ have nice geometrical interpretations. Each member of $\mathcal{F}_{2}$ is convex in the direction of imaginary axis 
 and every $f\in\mathcal{F}_{4}$ is convex in the positive direction of real axis. 
 For $f\in\mathcal{S},$ define   $\widetilde{F}_{f}(z):=2 F_{f}(z),$ where 
\begin{equation}\label{EQN26}
    F_{f}(z)=\log \left(\frac{f(z)}{z}\right)= 2 \sum_{n=1}^{\infty} \gamma_{n}(f)z^{n} \quad (z\in\mathbb{D}\backslash\left\{0\right\}, \log 1:=0).
\end{equation} 
The coefficients $\gamma_{n}:=\gamma_{n}(f)$ in \eqref{EQN26} are known as logarithmic coefficients of $f.$ Sharp logarithmic coefficient estimates for the class $\mathcal{S},$ are already known for $n=1$ and $n=2,$ given by $|\gamma_{1}|\leq 1$ and $|\gamma_{2}|\leq 1/2+1/e^{2}.$ However bound of $\gamma_{n}$ for $n\geq 3,$ is still an open problem. 
Logarithmic coefficients played a crucial role in Milin's conjecture\cite{Milin(1977)}, which states that if $f\in\mathcal{S},$ then 
\[\sum_{m=1}^{n}\sum_{k=1}^{n}\left(k|\gamma_{k}|^{2}-\frac{1}{k}\right)\leq 0.\]
For $q,n\in\mathbb{N},$ the $q^{th}$ Hankel determinant $H_{q,n}(f),$ for $f\in\mathcal{A}$ is defined as, 
\[H_{q,n}(f)=\begin{vmatrix}
a_{n} & a_{n+1} & \cdots & a_{n+q-1}\\
a_{n+1} & a_{n+2}& \cdots  & a_{n+q} \\
\vdots & \vdots & \vdots & \vdots \\
a_{n+q-1} & a_{n+q} & \cdots & a_{n+2(q - 1)}
\end{vmatrix}.\]
Hankel determinant of exponential polynomials was studied by Ehrenborg in \cite{Ehrenborg(2000)}. Hayman \cite{Hayman(2001)} studied some properties of Hankel transform of an integer sequence. Hankel determinants $H_{q,n}(f) $ are useful, in showing that a function of bounded characteristic
in $\mathbb{D},$ i.e., a function which is a ratio of two bounded analytic functions with its Laurent series around the origin having integral coefficients, is rational \cite{Cantor(1963)}. It is worth mentioning that the Hankel determinant $H_{q,n}(f)$ has been investigated by several authors to study its rate of growth as $n \to \infty.$ Additionally, the notion of Hankel determinant is extremely helpful in the study of power series with integral coefficients and in the theory of singularities \cite{Dienes(1957)}. Noonan and Thomas \cite{Noonan & Thomas (1976)} studied the growth rate of the second Hankel determinant of an areally mean p-valent function. Pommerenke \cite{Pommerenke(1966)} proved that the
Hankel determinants of univalent functions satisfy $|H_{q,n}(f)|<K n^{-(\frac{1}{2}+\beta)q+\frac{3}{2}},$ where $\beta>1/4000$ and $K$ depends only on $q.$  Later, Hayman \cite{Hayman(1968)} proved that $H_{2,n}(f)<A n^{1/2},$ where $A$ is an absolute constant, for areally mean univalent functions. One can observe that $H_{2,1}(f)=a_{3}-a_{2}^{2},$
is the Fekete-Szeg\"{o} functional, which is further generalised to $a_{3}-\mu a_{2}^{2},$ where $\mu$ is a complex number. In 1916, Bieberbach \cite{Goodman(1983)II} estimated $H_{2,1}(f)$ for the class $\mathcal{S}.$
Several authors have studied second Hankel determinant (see \cite{Lecko(2022),Lee & Ravi & Subramanian(2013),Noonan & Thomas (1976)}), denoted by $H_{2,2}(f)$ for certain subclasses of $\mathcal{A}.$ Many authors have studied the problem of calculating $\max_{f\in\mathbb{F}}|H_{2,2}(f)|$ for various subfamilies $\mathbb{F}\subset\mathcal{A}$ \cite{Janteng(2006),Kowalczyk & Lecko(2014),Lee & Ravi & Subramanian(2013)}. Infact Noor \cite{Noor(1992)} studied Hankel determinant for close-to-convex functions. For $f\in\mathcal{A},$ define the $q^{th}$ Hankel determinant $H_{q,n}(F_{f}),$ where $q,n\in\mathbb{N},$ with entries as logarithmic coefficients, 
\[H_{q,n}(F_{f})=\begin{vmatrix}
\gamma_{n} & \gamma_{n+1} & \cdots & \gamma_{n+q-1}\\
\gamma_{n+1} & \gamma_{n+2}& \cdots  & \gamma_{n+q} \\
\vdots & \vdots & \vdots & \vdots \\
\gamma_{n+q-1} & \gamma_{n+q} & \cdots & \gamma_{n+2(q - 1)}
\end{vmatrix}.\] 
It is worth mentioning that, the problem of studying Hankel determinant with entries as logarithmic coefficients, was coined by Kowalczyk et al. \cite{Lecko(2022)}. 
In this paper, an attempt is made to study the problem of establishing sharp bound of $H_{2,1}(F_{f})=\gamma_{1}\gamma_{3}-\gamma_{2}^{2},$ which has a striking resemblance to $H_{2,1}(f)=a_{2}a_{4}-a_{3}^{2},$ where $f$ lies in some subclass of $\mathcal{A}.$ Further, from \eqref{EQN26} logarithmic coefficients of $f\in\mathcal{S},$ can be expressed as 
\[\gamma_{1}=\frac{1}{2}a_{2}, \quad \gamma_{2} = \frac{1}{2}\left(a_{3} - \frac{1}{2}a_{2}^{2}\right),\quad \gamma_{3}= \frac{1}{4}\left(a_{4} - a_{2}a_{3} +\frac{1}{3}a_{2}^{3}\right).\]
Then for $f\in\mathcal{S}$ \[H_{2,1}(F_{f})=\gamma_{1}\gamma_{3} - \gamma_{2}^{2} = \frac{1}{4}\left(a_{2}a_{4} - a_{3}^{2} +\frac{1}{12}a_{2}^{4}\right).\] 
Note that for $f\in\mathcal{A},$ the functional $|H_{2,1}(F_{f})|=|\gamma_{1}\gamma_{3} - \gamma_{2}^{2}|$ is rotationally invariant. For this reason, it is sufficient to assume that the second coefficient $a_{2}$ is real. Several authors have studied and established varied results for these classes (see \cite{Cho & Lecko(2020),Cho & Lecko(2019)}). Based on the definition of each class in \eqref{EQN40} and \eqref{EQN51}, functions in these classes can be represented in terms of Carath\'{e}odory class $\mathcal{P},$ consisting of analytic functions $p(z),$ defined as
\begin{equation}\label{EQN53}
    p(z)=1+\sum_{n=1}^{\infty}c_{n}z^{n}
\end{equation}such that $\operatorname{Re} p(z) > 0.$ For the purpose of computing the bounds for $H_{2,1}(F_{f}),$ we use the coefficient formula given in \eqref{EQN21} for $c_{1}$ given in  \cite{Caratheodory(1907)},  for $c_{2}$ \cite[pg. 166]{Pommerenke(1975)} and the formula for coefficient $c_{3}$ due to Libera and Zlotkiewicz \cite{Libera(1982),Libera(1983)}. Moreover, results stated by Cho et al. \cite{Cho & Lecko(2019)} highlight the importance of functions given in \eqref{EQN35} and \eqref{EQN36}. Classes defined in \eqref{EQN40} and \eqref{EQN51}, have been extensively studied on different aspects earlier, namely early logarithmic and inverse logarithmic coefficients bounds \cite{Goel & Kumar(2019)}, successive inverse coefficients \cite{Virender(2021)}, Hermitian-Toeplitz determinants \cite{S & V Kumar(2022)}. However, no work has been carried out for Hankel determinants of logarithmic coefficients for functions lying in these classes. In this article, we will address this problem.  Lemmas stated below serve as a prerequisite for deriving our subsequent results.

\begin{lemma}\cite{Libera(1982),Libera(1983),Pommerenke(1975)}\label{l6}
If $p\in\mathcal{P}$ of the form $p(z)=1+c_{1}z+c_{2}z^{2}+c_{3}z^{3}+\ldots,$ with $c_{1}\geq 0,$ then 
\begin{align}
    &c_{1} = 2 \zeta_{1} \label{EQN21}\\&
    c_{2} = 2 \zeta_{1}^{2} + 2 (1 - \zeta_{1}^{2})\zeta_{2} \nonumber
    \end{align}
     and
     \begin{align} \label{EQN23}
    c_{3}  &= 2 \zeta_{1}^{3} + 4(1 - \zeta_{1}^{2}) \zeta_{1} \zeta_{2} - 2 (1 - \zeta_{1}^{2})\zeta_{1}\zeta_{2}^{2} + 2(1 - \zeta_{1}^{2}) (1-|\zeta_{2}|^{2})\zeta_{3}
    \end{align}
for some $\zeta_{1}\in[0,1]$ and $\zeta_{2},\zeta_{3}\in\overline{\mathbb{D}}:=\left\{z\in\mathbb{C}:|z|<1\right\}.$ For $\zeta_{1}\in\mathbb{D}$ and $\zeta_{2}\in\mathbb{T}=\left\{z\in\mathbb{C}:|z|=1\right\},$ there is a unique function $p\in\mathcal{P}$ with $c_{1}$ and $c_{2}$ as in \eqref{EQN21}-\eqref{EQN23}, namely 
\begin{equation}\label{EQN35}
    p(z) = \frac{1+(\overline{\zeta_{1}}\zeta_{2}+\zeta_{1})z + \zeta_{2}z^{2}}{1+(\overline{\zeta_{1}}\zeta_{2}-\zeta_{1})z - \zeta_{2}z^{2}} \quad (z\in\mathbb{D}).
\end{equation}
For $\zeta_{1},\zeta_{2}\in\mathbb{D}$ and $\zeta_{3}\in \mathbb{T},$ there exists a unique $p\in\mathcal{P}$ with $c_{1},c_{2}$ and $c_{3}$ as in \eqref{EQN21}-\eqref{EQN23}, namely,
\begin{equation}\label{EQN36}
    \frac{1+(\overline{\zeta _1}\zeta _2  + \overline{\zeta _2}\zeta _3 + \zeta _1)z + (\overline{\zeta _1}\zeta _3 + \zeta _1\overline{\zeta _2} \zeta _3 +\zeta _2)z^2 + \zeta _3 z^3}{1+(\overline{\zeta _1}\zeta _2  + \overline{\zeta _2}\zeta _3 - \zeta _1)z + (\overline{\zeta _1}\zeta _3 - \zeta _1\overline{\zeta _2} \zeta _3 - \zeta _2)z^2 - \zeta _3 z^3} \quad (z\in\mathbb{D}).
\end{equation}
\end{lemma}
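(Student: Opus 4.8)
The plan is to realise every $p\in\mathcal{P}$ as the Cayley transform of a Schur function and then read off $c_1,c_2,c_3$ from the successive Schur parameters of that function. First I would set $\omega(z):=(p(z)-1)/(p(z)+1)$; since $\operatorname{Re}p>0$ and $p(0)=1$, the map $\omega$ sends $\mathbb{D}$ into itself with $\omega(0)=0$, so by the Schwarz lemma $\omega(z)=z\,w(z)$ for some Schur function $w:\mathbb{D}\to\overline{\mathbb{D}}$. Inverting the transform gives the basic identity
\[
p(z)=\frac{1+zw(z)}{1-zw(z)}=1+2\sum_{k\geq1}\bigl(zw(z)\bigr)^{k},
\]
which I would expand in powers of $z$ to match coefficients.

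Next I would run the Schur continued-fraction algorithm on $w$. Writing $w=w_{0}$, $\zeta_{1}:=w_{0}(0)\in\overline{\mathbb{D}}$, and $w_{j}(z):=z^{-1}(w_{j-1}(z)-\zeta_{j})/(1-\overline{\zeta_{j}}\,w_{j-1}(z))$ with $\zeta_{j+1}:=w_{j}(0)$, Schur's lemma guarantees each $w_{j}$ is again a Schur function, so $\zeta_{1},\zeta_{2},\zeta_{3}\in\overline{\mathbb{D}}$. Inverting one step at a time via $w_{j-1}(z)=(\zeta_{j}+z\,w_{j}(z))/(1+\overline{\zeta_{j}}\,z\,w_{j}(z))$, I would expand $w_{0}$ to order $z^{2}$ and obtain its first two Taylor coefficients as $(1-|\zeta_{1}|^{2})\zeta_{2}$ and $(1-|\zeta_{1}|^{2})[(1-|\zeta_{2}|^{2})\zeta_{3}-\overline{\zeta_{1}}\zeta_{2}^{2}]$. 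Setting $u=zw$ and comparing $p=1+2u+2u^{2}+2u^{3}+\cdots$ power by power gives $c_{1}=2\zeta_{1}$, $c_{2}=2[z^{2}]u+2\zeta_{1}^{2}$, and $c_{3}=2[z^{3}]u+4\zeta_{1}\,[z^{2}]u+2\zeta_{1}^{3}$. Substituting the two Taylor coefficients above, and using that the normalisation $c_{1}\geq0$ forces $\zeta_{1}=w_{0}(0)\in[0,1]$ so that $\overline{\zeta_{1}}=\zeta_{1}$, reproduces exactly \eqref{EQN21}--\eqref{EQN23}.

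For the rigidity statements I would invoke the maximum modulus principle at the stage where a Schur parameter hits the boundary. If $\zeta_{1}\in\mathbb{D}$ but $\zeta_{2}\in\mathbb{T}$, then $|w_{1}(0)|=1$ forces $w_{1}\equiv\zeta_{2}$; feeding this through one inverse Schur step gives $w_{0}(z)=(\zeta_{1}+\zeta_{2}z)/(1+\overline{\zeta_{1}}\zeta_{2}z)$, and substituting into $p=(1+zw_{0})/(1-zw_{0})$ yields precisely the rational function \eqref{EQN35}, the unique element of $\mathcal{P}$ with the prescribed $c_{1},c_{2}$. Likewise, if $\zeta_{1},\zeta_{2}\in\mathbb{D}$ and $\zeta_{3}\in\mathbb{T}$, then $w_{2}\equiv\zeta_{3}$, and two inverse Schur steps followed by the Cayley transform produce \eqref{EQN36}. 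Uniqueness is immediate because the Schur algorithm is a bijection between finite parameter sequences terminating at a unimodular constant and the corresponding finite Blaschke-type functions in $\mathcal{P}$.

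The main obstacle is computational rather than conceptual: the bookkeeping in extracting the order-$z^{3}$ coefficient that yields the Libera--Zlotkiewicz form of $c_{3}$, where the cross term $2\zeta_{1}[z^{2}]u$ from $u^{2}$ must be combined correctly with the second Schur coefficient of $w$, together with verifying that the closed forms \eqref{EQN35} and \eqref{EQN36} genuinely have positive real part and reproduce \eqref{EQN21}--\eqref{EQN23}. Both reduce to careful but routine polynomial algebra once the Schur correspondence is in place.
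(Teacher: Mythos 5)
This lemma is quoted in the paper with citations to Carath\'eodory, Pommerenke, Libera--Z\l otkiewicz and (for the extremal formulas \eqref{EQN35}--\eqref{EQN36}) Cho--Kowalczyk--Lecko; no proof is given in the text, so there is nothing internal to compare against line by line. Your Schur-algorithm derivation is correct and self-contained: the Cayley transform $\omega=(p-1)/(p+1)=zw(z)$, the identification of $\zeta_1,\zeta_2,\zeta_3$ with the Schur parameters of $w$, and the resulting Taylor coefficients $b_1=(1-|\zeta_1|^2)\zeta_2$ and $b_2=(1-|\zeta_1|^2)[(1-|\zeta_2|^2)\zeta_3-\overline{\zeta_1}\zeta_2^2]$ all check out, and substituting into $c_1=2b_0$, $c_2=2b_1+2b_0^2$, $c_3=2b_2+4b_0b_1+2b_0^3$ with $\zeta_1=b_0\in[0,1]$ reproduces \eqref{EQN21}--\eqref{EQN23} exactly; the maximum-modulus rigidity step likewise yields \eqref{EQN35} and \eqref{EQN36} after the algebra you describe. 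This differs in flavour from the original Libera--Z\l otkiewicz route, which extracts the same parametrisation from the Carath\'eodory--Toeplitz positive-semidefiniteness conditions on the coefficient body of $\mathcal{P}$; the two are equivalent, but your version makes the uniqueness assertions essentially automatic (a unimodular Schur parameter freezes the tail of the algorithm), whereas the determinant approach needs a separate argument for them. Two small points worth tightening: state explicitly that the algorithm halts when $|\zeta_1|=1$ (then $w\equiv\zeta_1$ and $\zeta_2,\zeta_3$ are unconstrained, consistent with the vanishing factors $1-\zeta_1^2$), and for uniqueness note that $\zeta_1,\zeta_2$ are themselves uniquely recovered from $c_1,c_2$ when $|\zeta_1|<1$, so every $p$ with those coefficients is forced into the frozen form.
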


Following result is due to  Choi, Kim and Sugawa
\cite{Choi & Sugawa(2007)}.
\begin{lemma}\cite{Choi & Sugawa(2007)}\label{l7}
For $A,B,C\in\mathbb{R},$ let 
\[Y(A,B,C):= \max\left\{|A+B z + C z^{2}|+1-|z|^{2}:z\in\overline{\mathbb{D}}\right\}.\]
\begin{enumerate}
\item If $A C \geq 0,$ then 
 \begin{align*}
  Y(A,B,C)=\begin{cases} 
    |A|+|B|+|C|,  &  |B|\geq 2(1 - |C|), \\  \ \\
    1+|A|+\dfrac{B^{2}}{4(1-|C|)},  &   |B|<2(1-|C|). 
     \end{cases}
      \end{align*}
\item If $A C <0,$ then    
\begin{align*}
  Y(A,B,C)=\begin{cases} 
    1 - |A|+\dfrac{|B|}{4(1 - |C|)},  &  -4 A C(C^{-2} - 1) \leq B^{2} \land |B|<2(1 - |C|), \\  \ \\ 
    1+|A|+\dfrac{B^{2}}{4(1+|C|)},  &   B^{2}< \min \{4(1+|C|^{2}), - 4 A C (C^{-2} - 1)\}, \\ \ \\
    R(A,B,C), & \text{otherwise}
     \end{cases}
      \end{align*}
\end{enumerate}
 where  \begin{align*}
  R(A,B,C)=\begin{cases} 
     |A| +  |B| - |C|,  &  |C|(|B|+4|A|) \leq |AB|, \\  \ \\
    -|A| + |B| + |C|,  &  |AB| \leq |C|(|B| - 4|A|)  , \\ \ \\
    (|C|+|A|)\sqrt{1-\dfrac{B^{2}}{4 A C}}, & otherwise.
     \end{cases}
      \end{align*} 
\end{lemma}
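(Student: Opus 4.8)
The plan is to reduce the two-real-variable maximization over the closed unit disk to a one-variable problem in the radius. Writing $z = re^{i\theta}$ with $0 \le r \le 1$ and $\theta \in [-\pi,\pi]$, and setting $g(z) = A + Bz + Cz^2$, we have $Y(A,B,C) = \max_{r,\theta}\bigl(|g(re^{i\theta})| + 1 - r^2\bigr)$. Since $1 - |z|^2$ is constant on each circle $|z| = r$, it suffices to first compute $M(r) := \max_{|z|=r}|g(z)|$ and then maximize $M(r) + 1 - r^2$ over $r \in [0,1]$. Because $A,B,C$ are real, expanding $|g(re^{i\theta})|^2$ and using $\cos 2\theta = 2\cos^2\theta - 1$ together with $\cos\theta\cos 2\theta + \sin\theta\sin 2\theta = \cos\theta$ gives
\[
|g(re^{i\theta})|^2 = 4ACr^2 x^2 + 2Br(A + Cr^2)x + \bigl(A^2 + B^2 r^2 + C^2 r^4 - 2ACr^2\bigr),
\]
where $x = \cos\theta \in [-1,1]$. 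The whole argument hinges on the sign of the leading coefficient $4ACr^2$, which is precisely the dichotomy $AC \ge 0$ versus $AC < 0$ recorded in the statement.

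In the case $AC \ge 0$ this quadratic in $x$ is convex, so its maximum on $[-1,1]$ is attained at an endpoint $x = \pm 1$, that is, at the real points $z = \pm r$; since $A$ and $Cr^2$ then share a sign, this yields the closed form $M(r) = |A| + |B|r + |C|r^2$. It then remains to maximize the single-variable function $h(r) = 1 + |A| + |B|r + (|C|-1)r^2$ on $[0,1]$. When $|B| \ge 2(1 - |C|)$ the vertex of $h$ lies at or beyond $r = 1$, so the maximum is $h(1) = |A| + |B| + |C|$; when $|B| < 2(1-|C|)$ the interior critical point $r^\ast = |B|/(2(1-|C|))$ is admissible and substituting it gives $1 + |A| + B^2/(4(1-|C|))$. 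This reproduces part (1) and is the easy half.

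The substantive work is the case $AC < 0$, where the parabola in $x$ opens downward and its maximum on $[-1,1]$ is attained either at the interior vertex $x^\ast(r) = -B(A+Cr^2)/(4ACr)$, when this value lies in $[-1,1]$, or again at an endpoint. I would first determine the $r$-intervals on which $x^\ast(r)$ is admissible, substitute to obtain $M(r)$ as an explicit piecewise-smooth function of $r$, and then maximize $M(r) + 1 - r^2$ over $[0,1]$. The competition between the interior-vertex branch and the endpoint branch, played off against the penalty $1 - r^2$, is what produces the three-way split of part (2) and the auxiliary quantity $R(A,B,C)$, whose own three sub-cases arise from comparing the candidate critical radius with the endpoints $r=0,1$ and with the threshold where $x^\ast$ enters $[-1,1]$. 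The inequalities $-4AC(C^{-2}-1) \le B^2$ and $B^2 < 4(1+|C|^2)$ are exactly the conditions deciding which branch is active; note that $AC<0$ makes $1 - B^2/(4AC) > 1$, so the radical in $R$ is real. Finally I would exhibit the extremal $z = re^{i\theta}$ in each regime (real points giving the $|A|\pm|B|\pm|C|$ values, and the vertex point giving the $R$-expression) to confirm that each stated value is genuinely attained and hence sharp.

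The main obstacle will be the bookkeeping in the $AC < 0$ case: tracking how the active maximizer in $x$ switches as $r$ grows, handling the resulting non-smooth one-variable optimization, and isolating the precise threshold inequalities separating the regimes. This is where the formula for $R(A,B,C)$ and its sub-cases must be pinned down, and where the sign conventions on $A,B,C$ and the positions of the critical points relative to $[-1,1]$ and to the endpoints of $[0,1]$ demand the most careful case discipline.
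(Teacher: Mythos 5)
The paper offers no proof of this lemma at all: it is imported verbatim from Choi, Kim and Sugawa, so there is no in-paper argument to compare your route against. Your reduction is nonetheless the standard one from the original source, and the parts you actually execute are correct: the expansion of $|A+Bre^{i\theta}+Cr^{2}e^{2i\theta}|^{2}$ as the quadratic $4ACr^{2}x^{2}+2Br(A+Cr^{2})x+(A^{2}+B^{2}r^{2}+C^{2}r^{4}-2ACr^{2})$ in $x=\cos\theta$ checks out, and in the case $AC\geq 0$ the endpoint values factor as $(A\pm Br+Cr^{2})^{2}$ with $A$ and $Cr^{2}$ of one sign, so $M(r)=|A|+|B|r+|C|r^{2}$ and the one-variable maximization of $1+|A|+|B|r-(1-|C|)r^{2}$ genuinely yields part (1). (Minor point: when $|C|\geq 1$ the parabola in $r$ opens upward and your ``vertex at or beyond $r=1$'' language does not apply, but monotonicity still forces the maximum to $r=1$, so the conclusion survives.)

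The genuine gap is that part (2) and the formula for $R(A,B,C)$ --- which is where essentially all the content of the lemma lives --- are only announced, not proved. Concretely, three things are missing. First, you never compute the vertex value: completing the square gives $|g|^{2}_{\max}=(A-Cr^{2})^{2}\bigl(1-\tfrac{B^{2}}{4AC}\bigr)$, hence $M(r)=(|A|+|C|r^{2})\sqrt{1-B^{2}/(4AC)}$ on the vertex branch (using $AC<0$ to write $|A-Cr^{2}|=|A|+|C|r^{2}$); this is what ultimately produces the third sub-case of $R$ at $r=1$. Second, you assert but do not verify that the admissibility condition $x^{\ast}(r)=-B(A+Cr^{2})/(4ACr)\in[-1,1]$ translates into the stated inequalities $-4AC(C^{-2}-1)\leq B^{2}$ and $B^{2}<4(1+|C|)^{2}$; deriving these (and noting that the statement as printed in the paper actually has $4(1+|C|^{2})$ where the comparison with $B^{2}<(2(1+|C|))^{2}$ would give $4(1+|C|)^{2}$, a discrepancy worth flagging) is precisely the content of the case split. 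Third, the piecewise one-variable maximization over $r\in[0,1]$, with the maximizer switching between the endpoint branch and the vertex branch, is the step that generates the three sub-cases of $R$ via the comparisons $|C|(|B|+4|A|)\lessgtr|AB|$; none of that bookkeeping is carried out. As it stands the proposal is a correct and well-oriented plan with the easy half done, but it does not constitute a proof of the lemma.
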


\section{Hankel Determinant of Logarithmic Coefficients}
In this section, we establish results pertaining to sharp estimates of $H_{2,1}(F_{f})$ for the classes $\mathcal{F}_{1},\mathcal{F}_{2},\mathcal{F}_{3},\mathcal{F}_{4}$ and $\mathcal{S}^{*}_{s}.$ We begin by determining sharp bounds of $H_{2,1}(F_{f})$ for $f\in \mathcal{S}^{*}_{s}.$  
\begin{theorem}\label{EQN37}
Let $F_{f}$ be given by \eqref{EQN26}. 
If $f\in\mathcal{S}^{*}_{s},$ then sharp bound on Hankel determinant for $F_{f}$ is given by 
\begin{equation}\label{EQN48}
    |H_{2,1}(F_{f})|\leq \frac{1}{4}.
\end{equation} 
Above inequality is sharp due to the function
\begin{equation}\label{EQN47}
    \tilde{f}(z)=\int_{0}^{z}(1+t^{
2})/(1-t^{2})^{2} dt.
\end{equation}
\end{theorem}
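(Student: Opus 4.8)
The plan is to start from the analytic characterization of the class. Since $f\in\mathcal{S}^{*}_{s}$, there exists $p\in\mathcal{P}$ of the form \eqref{EQN53} such that
\begin{equation*}
zf'(z)=p(z)\,\frac{f(z)-f(-z)}{2}.
\end{equation*}
The factor $(f(z)-f(-z))/2$ is the odd part $z+a_{3}z^{3}+a_{5}z^{5}+\cdots$ of $f$, so I would expand both sides and match the coefficients of $z^{2},z^{3},z^{4}$. This yields $a_{2}=c_{1}/2$, $a_{3}=c_{2}/2$ and $a_{4}=c_{3}/4+c_{1}c_{2}/8$. Substituting these into the expressions for $\gamma_{1},\gamma_{2},\gamma_{3}$ recorded before Lemma~\ref{l6} gives $\gamma_{1}=c_{1}/4$, $\gamma_{2}=c_{2}/4-c_{1}^{2}/16$ and $\gamma_{3}=c_{3}/16-c_{1}c_{2}/32+c_{1}^{3}/96$, and hence, after collecting terms,
\begin{equation*}
H_{2,1}(F_{f})=\gamma_{1}\gamma_{3}-\gamma_{2}^{2}=\frac{c_{1}c_{3}}{64}-\frac{c_{2}^{2}}{16}+\frac{3c_{1}^{2}c_{2}}{128}-\frac{c_{1}^{4}}{768}.
\end{equation*}

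Invoking the rotational invariance of $|H_{2,1}(F_{f})|$ noted in the introduction, I would assume $c_{1}\geq 0$ and apply the Libera--Zlotkiewicz parametrization \eqref{EQN21}--\eqref{EQN23} of Lemma~\ref{l6}, writing $c_{1},c_{2},c_{3}$ in terms of $\zeta_{1}\in[0,1]$ and $\zeta_{2},\zeta_{3}\in\overline{\mathbb{D}}$. The only place $\zeta_{3}$ enters is through the last summand of $c_{3}$, so after substitution $\zeta_{3}$ appears linearly in $H_{2,1}(F_{f})$ with coefficient $\zeta_{1}(1-\zeta_{1}^{2})(1-|\zeta_{2}|^{2})/16$. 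Applying the triangle inequality in $\zeta_{3}$ then bounds $|H_{2,1}(F_{f})|$ by a $\zeta_{3}$-free expression depending only on $\zeta_{1}$ and the complex variable $\zeta_{2}$, plus the above coefficient. I would factor out a common multiple of $\zeta_{1}(1-\zeta_{1}^{2})$ so that the $\zeta_{2}$-dependent part assumes the shape $|A+B\zeta_{2}+C\zeta_{2}^{2}|+1-|\zeta_{2}|^{2}$, matching the functional $Y(A,B,C)$ of Lemma~\ref{l7}, with $A,B,C$ explicit rational functions of $\zeta_{1}$.

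The main obstacle will be executing the case analysis of Lemma~\ref{l7}: I must determine the sign of $AC$ and test the threshold conditions (e.g.\ $|B|\lessgtr 2(1-|C|)$ and the discriminant-type inequalities) as $\zeta_{1}$ ranges over $[0,1]$, showing in each resulting branch that the bound $1/4$ is never exceeded. The degenerate endpoint $\zeta_{1}=0$ must be handled separately: there $H_{2,1}(F_{f})=-c_{2}^{2}/16$, so $|H_{2,1}(F_{f})|=|c_{2}|^{2}/16\leq 1/4$ directly, and this is precisely where equality occurs. I expect the interior estimates to stay strictly below $1/4$, with the heavy but routine algebra concentrated in simplifying $Y(A,B,C)$ after the substitution.

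Finally, I would confirm sharpness by verifying that $\tilde{f}$ in \eqref{EQN47} corresponds to $p(z)=(1+z^{2})/(1-z^{2})$, i.e.\ to the choice $\zeta_{1}=0$, $\zeta_{2}=1$; indeed $\int_{0}^{z}(1+t^{2})/(1-t^{2})^{2}\,dt=z/(1-z^{2})$ makes $2z\tilde{f}'(z)/(\tilde{f}(z)-\tilde{f}(-z))=(1+z^{2})/(1-z^{2})$, for which $\gamma_{1}=0$ and $\gamma_{2}=1/2$, giving $|H_{2,1}(F_{\tilde f})|=1/4$ and establishing that the inequality \eqref{EQN48} is best possible.
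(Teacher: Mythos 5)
Your strategy is exactly the paper's: derive $a_2,a_3,a_4$ from $2zf'(z)=p(z)(f(z)-f(-z))$, pass to the Libera--Zlotkiewicz parameters via Lemma~\ref{l6}, kill $\zeta_3$ by the triangle inequality, reduce to the functional $|A+B\zeta_2+C\zeta_2^2|+1-|\zeta_2|^2$, and finish with the Choi--Kim--Sugawa Lemma~\ref{l7}; the sharpness check via $\tilde f(z)=z/(1-z^2)$ is also the paper's and is correct. However, there is a concrete computational error at the very start that poisons everything downstream. You took $\gamma_3=\tfrac14\left(a_4-a_2a_3+\tfrac13 a_2^3\right)$ from the display before Lemma~\ref{l6}, but that display is inconsistent with the definition \eqref{EQN26}: since $\log(f(z)/z)=2\sum\gamma_n z^n$, the correct value is $\gamma_3=\tfrac12\left(a_4-a_2a_3+\tfrac13 a_2^3\right)$, which is also the only value compatible with your own $\gamma_1=a_2/2$, $\gamma_2=\tfrac12(a_3-\tfrac12 a_2^2)$ and with the identity $H_{2,1}(F_f)=\tfrac14\left(a_2a_4-a_3^2+\tfrac1{12}a_2^4\right)$ displayed immediately afterwards. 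With the correct $\gamma_3$ (and your $a_4=\tfrac18(c_1c_2+2c_3)$, which \emph{is} the right coefficient relation) one gets
\begin{equation*}
H_{2,1}(F_f)=\frac{1}{768}\bigl(c_1^4+12c_1^2c_2+24c_1c_3-48c_2^2\bigr),
\end{equation*}
whereas your expression $\tfrac{c_1c_3}{64}-\tfrac{c_2^2}{16}+\tfrac{3c_1^2c_2}{128}-\tfrac{c_1^4}{768}$ has the wrong coefficients on $c_1c_3$, $c_1^2c_2$ and $c_1^4$. The error is invisible at the extremal point $c_1=0$ (both formulas give $-c_2^2/16$ there), which is presumably why you did not catch it, but the functions $A,B,C$ and the entire Lemma~\ref{l7} case analysis would be carried out for the wrong functional.

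Beyond that, the proposal defers the substantive part of the argument. The paper's proof is almost entirely the branch-by-branch verification (the sign of $AC$ splits $[0,1]$ at $\zeta_1=\sqrt{2/3}$, and the $AC<0$ regime requires checking four subcases of $R(A,B,C)$, each reducing to a polynomial inequality in $\zeta_1$); you acknowledge this must be done but only state that you ``expect'' each branch to stay below $1/4$. As written, the only cases actually verified are $\zeta_1=0$ and the sharpness computation, so the bound \eqref{EQN48} is not established.
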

\begin{proof}
Suppose $f\in\mathcal{A}$ of the form $f(z)=z+a_{2}z^{2}+a_{3}z^{3}+a_{4}z^{4}+\cdots$ satisfy \begin{equation}\label{EQN39}
    \frac{2zf'(z)}{f(z)-f(-z)} = p(z),
\end{equation}  where $p\in\mathcal{P}$ is given by \eqref{EQN53}. It is a well-known fact that $\mathcal{P}$ is invariant under rotation, we assume $c_{1}\in[0,2].$ From equation \eqref{EQN39} we express coefficients of $f(z),$ $a_{i}$$'s$ $(i=2,3,4)$ in terms of $c_{i}$$'s$ $(i=1,2,3)$ as follows
\begin{align*}
    a_2=\frac{1}{2}c_1, \quad a_3 = \frac{1}{2}c_2,\quad a_4 = \frac{1}{8} \left(c_1 c_2-2 c_3\right).
\end{align*}
Lemma \ref{l6} yields the following expression in terms of $\zeta_{i}$ $'s,$ where $\zeta_{i}\in\overline{\mathbb{D}}$ $(i=1,2,3).$
\begin{align}\label{EQN41}
\mathcal{L}:=\gamma_{1}\gamma_{3}-\gamma_{2}^{2} &= \frac{1}{4} \left( a_4 a_2-a_3^2 +\frac{1}{12}a_2^4\right) \nonumber \\& =\frac{1}{768} (c_1^4+12 c_2 c_1^2-24 c_3 c_1-48 c_2^2)  \nonumber \\& =\frac{1}{48} (6 \zeta _2^2(5 \zeta _1^2-3 \zeta _1^4-2)-11 \zeta _1^4-6 \zeta _1 (1-\zeta _1^2) \zeta _3 (1-| \zeta _2| {}^2) \nonumber \\&\quad -30 \zeta _2 \zeta _1^2 (1-\zeta _1^2)).
\end{align}
Expression in \eqref{EQN41} leads to,
\[|\mathcal{L}|=
\begin{cases} 
   1/4,  &  \zeta_{1}=0, \\  
   11/48,  &   \zeta_{1}=1. 
     \end{cases}\]
For $\zeta_{1}\in(0,1)$ and inequality $|\zeta_{3}|\leq 1,$ the expression in \eqref{EQN41} together with Lemma \ref{l6} results in the following inequality
\begin{equation}\label{EQN52}
    |\mathcal{L}| \leq \frac{1}{8}  \zeta _{1}  (1-\zeta _1^2) \Psi(A,B,C) 
\end{equation} where 
\[\Psi(A,B,C):= |A+B \zeta _2 + C \zeta _2^2| +1- |\zeta _2|{}^2\]
with
\begin{equation}\label{EQN42}
    A= -\frac{11 \zeta _1^3}{6 \left(1-\zeta _1^2\right)}, \quad B = -5 \zeta _1, \quad C = 3 \zeta _1-\frac{2}{\zeta _1}.
\end{equation}

\noindent In view of Lemma \ref{l7}, we now examine the following cases based on the expressions of $A,B$ and $C$ given in \eqref{EQN42}.\\
\noindent\textbf{I.} Let $\zeta_{1}\in X=(0,\zeta^{*}],$ where $\zeta^{*}=\sqrt{2/3}.$ It is easy to verify that $A C\geq0.$ Since 
$\left| B\right| -2 (1-\left| C\right| ) = \left(4-\zeta_{1}(2+\zeta_{1})\right)/\zeta_{1}$ is an increasing function on $X,$ then  $|B|-2(1-|C|)\geq 5 \zeta^{*}-2 > 0.$ Thus on applying Lemma \ref{l6} to inequality \eqref{EQN52}, we have
\[|\mathcal{L}| \leq \frac{1}{8} \zeta _1 (1-\zeta _1^2) (\left| A\right| +\left| B\right| +\left| C\right| ) = \frac{1}{4}-\frac{\zeta _1^4}{48}\leq \frac{1}{4}. \]
\noindent \textbf{II.} Assume $X^{*}=X^{c} \cap (0,1).$ Observe that $AC<0$ for each $\zeta_{1}\in X^{*}.$ This case demands the following subcases. \\
\indent \textbf{A.} Simple calculation reveals that, for each $\zeta_{1}$ in $X^{*},$
\[
 \begin{aligned}
    \left\{
    \begin{array}{ll}
       T_{1}(\zeta_{1}):= |B|-2(1-|C|)=\dfrac{1}{\zeta_{1}}(11 \zeta _1^2-2 \zeta _1-4)>0  \\ \\
         T_{2}(\zeta_{1}):=-4 A C\left(\dfrac{1}{C^2}-1\right) - B^2 = \dfrac{27 \zeta _1^2 - 62}{\zeta _1^2 (6-9 \zeta _1^2)}<0.
    \end{array}
    \right.
    \end{aligned}\]
From above we conclude that the set $T_{1}(X^{*}) \cap T_{2}(X^{*})$ is empty.   
Thus according to Lemma \ref{l7}, this case fails to exist for any $\zeta_{1}\in X^{*}.$\\
\indent \textbf{B.} For $\zeta_{1}\in X^{*},$ the expressions 
$4 (1 + | C|)^2$ and $-4 A C (C^{-2}-1)$ become 
\[ 4 \text{ }T_{3}(\zeta_{1}) :=4 (1 + | C|)^2  \text{ and } T_{4}(\zeta_{1}):=-4 A C \left(\frac{1}{C^2}-1\right)= \frac{22 \zeta _1^2 ( 9 \zeta _1^2-4)}{3(3\zeta _1^2-2)},\] respectively, where $T_{3}(x):=(3 x^{2}+x-2)^{2}/x^{2}.$ It is easy to observe that  $T_{3}'(x)$ is non-vanishing on $X^{*}.$ Infact $T_{3}'(1)=20,$ therefore $T_{3}(x)<4.$ Since $\zeta^{*}>{\zeta ^{*}}^{2},$ this yields that $T_{4}(x)$ is positive on $X^{*}.$ Thus inequality below is false for any $\zeta_{1}\in X^{*},$   
\[B^{2}= 25 \zeta_{1}^{2}<\min \{4 \text{ } T_{3}(x), T_{4}(x)\} = 4 \text{ } T_{3}(x).\] 
\indent \textbf{C.} The inequality below holds for $\zeta_{1}\in X^{*},$ 
\[|C|(|B|+4|A|) - |A B| = -\frac{13 \zeta _1^4-62 \zeta _1^2+60}{6 \left(1-\zeta _1^2\right)}\leq 0\] if and only if $13 \zeta _1^4-62 \zeta _1^2+60\geq 0.$ Due to Lemma \ref{l6} and equations \eqref{EQN41}-\eqref{EQN52}, \[|\mathcal{L}| \leq \frac{1}{8} \zeta _1 (1-\zeta _1^2) \left(| A| + | B | - | C | \right)= \frac{1}{4}-\frac{\zeta _1^4}{48}\leq \frac{1}{4}.\]
\indent
\textbf{D.} Finally the expression below is true for each $\zeta_{1}\in X^{*},$ 
\[| A B| -| C|  (| B| -4 | A| ) = \frac{277 \zeta _1^4-238 \zeta _1^2+60}{6(1-\zeta _1^2)}>0.\] 
Summarizing \textbf{I} and \textbf{II} inequality \eqref{EQN37} follows. In Lemma \ref{l6}, on replacing $\zeta_{1}=0, \zeta_{2}=\zeta_{3}=1$ in \eqref{EQN36} we get $\tilde{p}(z)=(1+z^{2})/(1-z^{2})\in\mathcal{P}.$ The function $\tilde{f}\in\mathcal{A}$ given in \eqref{EQN47} satisfies $2z\tilde{f}'(z)/(\tilde{f}(z)-\tilde{f}(-z)) = (1+z^2)/(1-z^2).$ Hence the result is sharp.
\end{proof}
\begin{theorem}
Let $F_{f}$ be given by \eqref{EQN26}. 
If $f\in\mathcal{F}_{2},$ then sharp bound on Hankel determinant for $F_{f}$ is given by
\begin{align}\label{EQN31}
    |H_{2,1}(F_{f})| \leq \frac{1}{4}.
\end{align}  Above inequality is sharp due to the function given in Theorem \ref{EQN37}.
\end{theorem}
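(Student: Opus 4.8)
The plan is to mirror the proof of Theorem \ref{EQN37}, now starting from the representation $(1-z^2)f'(z)=p(z)$ with $p\in\mathcal{P}$ as in \eqref{EQN53}. Matching coefficients of $z$ in $(1-z^2)f'(z)=1+c_1z+c_2z^2+\cdots$ gives $a_2=\tfrac12 c_1$, $a_3=\tfrac13(1+c_2)$ and $a_4=\tfrac14(c_1+c_3)$. Feeding these into $\mathcal{L}:=\gamma_1\gamma_3-\gamma_2^2=\tfrac14\bigl(a_2a_4-a_3^2+\tfrac1{12}a_2^4\bigr)$ yields
\[\mathcal{L}=\frac{1}{2304}\bigl(3c_1^4+72c_1^2+72c_1c_3-64(1+c_2)^2\bigr).\]
Since $|\mathcal{L}|$ is rotation invariant I take $c_1\in[0,2]$ and use Lemma \ref{l6} to rewrite $\mathcal{L}$ as a polynomial in $\zeta_1\in[0,1]$ and $\zeta_2,\zeta_3\in\overline{\mathbb{D}}$, in which $\zeta_3$ occurs only through the linear term $\tfrac12\zeta_1(1-\zeta_1^2)(1-|\zeta_2|^2)\zeta_3$.

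I would first dispose of the endpoints. At $\zeta_1=0$ one has $c_1=0$ and $\mathcal{L}=-\tfrac1{36}(1+c_2)^2$, so $|\mathcal{L}|\le\tfrac1{36}(1+|c_2|)^2\le\tfrac14$ using $|c_2|\le2$, with equality exactly when $c_2=2$. At $\zeta_1=1$ Lemma \ref{l6} forces $c_1=c_2=c_3=2$, hence $a_2=a_3=a_4=1$ and $|\mathcal{L}|=\tfrac1{48}<\tfrac14$.

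For $\zeta_1\in(0,1)$ I bound $|\zeta_3|\le1$ and factor out $\tfrac18\zeta_1(1-\zeta_1^2)$ to obtain
\[|\mathcal{L}|\le\tfrac18\zeta_1(1-\zeta_1^2)\,Y(A,B,C),\qquad A=\frac{5\zeta_1^4+2\zeta_1^2-4}{18\zeta_1(1-\zeta_1^2)},\quad B=\frac{2(\zeta_1^2-4)}{9\zeta_1},\quad C=-\frac{\zeta_1^2+8}{9\zeta_1},\]
where $Y$ is the quantity of Lemma \ref{l7}. Because $C<0$ on $(0,1)$ while the numerator of $A$ vanishes at $\zeta_1^2=(\sqrt{21}-1)/5$, the natural dichotomy is by the sign of $AC$. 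When $AC\ge0$ a direct computation gives $|B|-2(1-|C|)=\tfrac{2(4-3\zeta_1)}{3\zeta_1}>0$, so Lemma \ref{l7}(1) applies with $Y=|A|+|B|+|C|$, and the resulting estimate collapses to $|\mathcal{L}|\le\tfrac1{48}(12-12\zeta_1^2-\zeta_1^4)\le\tfrac14$.

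The technical heart is the regime $AC<0$. Here $|C|=\tfrac{\zeta_1^2+8}{9\zeta_1}>1$ throughout $(0,1)$, so $1-|C|<0$ and $-4AC(C^{-2}-1)<0\le B^2$; this makes the first two branches of Lemma \ref{l7}(2) vacuous and always places us in the $R(A,B,C)$ branch. I expect the main obstacle to be determining, uniformly in $\zeta_1$, which of the three sub-cases of $R$ is active and then dominating the outcome---especially the term $(|A|+|C|)\sqrt{1-B^2/(4AC)}$---by $\tfrac14$. A numerical check (for instance $|\mathcal{L}|$ near $0.06$ at $\zeta_1=0.85$ and near $0.04$ at $\zeta_1=0.9$) indicates the bound holds with ample room, so the work lies in the sign bookkeeping of a few polynomial inequalities in $\zeta_1$ rather than in any sharp competition. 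Finally, sharpness is inherited from Theorem \ref{EQN37}: the function $\tilde f$ of \eqref{EQN47} satisfies $(1-z^2)\tilde f'(z)=(1+z^2)/(1-z^2)\in\mathcal{P}$, so $\tilde f\in\mathcal{F}_2$, and it realises $|H_{2,1}(F_{\tilde f})|=\tfrac14$.
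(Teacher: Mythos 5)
Your setup coincides with the paper's: the same coefficient formulas $a_2=\tfrac12 c_1$, $a_3=\tfrac13(1+c_2)$, $a_4=\tfrac14(c_1+c_3)$, the same reduction to $\Psi(A,B,C)$ with the same $A,B,C$ after factoring out $\tfrac18\zeta_1(1-\zeta_1^2)$, and the same disposal of the endpoints $\zeta_1=0,1$ and of the regime $AC\ge 0$ (your closed form $\tfrac{1}{48}(12-12\zeta_1^2-\zeta_1^4)$ is exactly the paper's $S_1$). Your observation that $|C|=(8+\zeta_1^2)/(9\zeta_1)>1$ on $(0,1)$, so that $1-|C|<0$ and $-4AC(C^{-2}-1)<0\le B^2$, hence the first two branches of Lemma \ref{l7}(2) are automatically vacuous and only the $R(A,B,C)$ branch can occur, is correct and is in fact a cleaner route than the paper's sub-cases II.A and II.B, which establish the same vacuity by separate polynomial estimates.

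The gap is that you stop exactly where the remaining work begins. For $\zeta_1$ beyond $\zeta^{*}=\sqrt{(\sqrt{21}-1)/5}\approx 0.85$ you must (i) determine which of the three sub-cases of $R(A,B,C)$ is active, and (ii) prove that $\tfrac18\zeta_1(1-\zeta_1^2)\,R(A,B,C)\le\tfrac14$ there; you only announce that this is ``sign bookkeeping'' and cite two numerical samples. The paper actually carries this out: it shows $|C|(|B|+4|A|)>|AB|$ throughout (so the branch $R=|A|+|B|-|C|$ never occurs); that $|AB|\le|C|(|B|-4|A|)$ holds precisely for $\zeta^{*}<\zeta_1\le\sqrt{\sqrt{249}-15}$, where $R=-|A|+|B|+|C|$ yields a decreasing function bounded by $\tfrac{1}{600}(169-29\sqrt{21})\approx 0.060$; and that on the remaining interval the branch $(|A|+|C|)\sqrt{1-B^2/(4AC)}$ gives at most $\tfrac18(3\sqrt{249}-47)\approx 0.042$. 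These are elementary polynomial inequalities, but they are the substance of the case $AC<0$, and a proof must contain them; evaluating $|\mathcal{L}|$ at two sample points does not certify the bound on the whole interval. Everything else in your write-up, including the sharpness argument via $\tilde f$ from \eqref{EQN47} with $(1-z^2)\tilde f'(z)=(1+z^2)/(1-z^2)$, matches the paper.
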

\begin{proof}
Since $f\in\mathcal{F}_{2},$ then there exists an analytic function $p\in\mathcal{P}$ defined in \eqref{EQN53} such that
\begin{equation}\label{EQN54}
    (1 - z^{2})f'(z)= p(z).
\end{equation} Then from \eqref{EQN54} we obtain first three coefficients of $f(z)$ in terms of $c_{i}$$'s$ $(i=1,2,3)$ as follows:
\[a_{2}= \frac{1}{2}c_{1}, \quad a_{3}= \frac{1}{3} \left(1+c_2\right), \quad a_{4}=\frac{1}{4} \left(c_1+c_3\right).\] Using Lemma \ref{l6}, we express $\gamma_{1}\gamma_{3}-\gamma_{2}^{2}$ in terms of $\zeta_{i}'s$ by replacing $c_{i}$$'s$ with $\zeta_{i}$$'s$ $(i=1,2,3),$ 

\begin{align}
\mathcal{M}:=\gamma_{1}\gamma_{3}- \gamma_{2}^{2}&=\frac{1}{144} \big(\zeta _1^4(5 -4 \zeta _2 + 2 \zeta _2^2) +2 \zeta _1^2 (1 + 10 \zeta _2 + 7 \zeta _2^2) -4 (1 + 2 \zeta _2){}^2\nonumber\\&\quad+18 \zeta _1 \zeta _3 (1-\zeta _1^2)(1-|\zeta _2| {}^2)  \big) \label{EQN30}
\end{align}

\noindent Due to the fact that $|\zeta_{2}|\leq 1,$ inequality \eqref{EQN30} gives

\[
|\mathcal{M}| =\begin{cases}
  1/48,  &  \zeta_{1}=1, \\  
   (1/36)(1+2 \zeta_{2})^{2} \leq  1/4,  &   \zeta_{1}=0. 
\end{cases}
\]

\noindent Since $|\zeta_{3}|\leq 1,$ then for $\zeta_{1}\in(0,1),$ we have
\begin{align}\label{EQN55}
    |\mathcal{M}|& \leq \frac{1}{8} \zeta _1 (1-\zeta _1^2) \Psi(A,B,C),
    \end{align}
where \begin{equation*}
  \Psi(A,B,C):= |A +B \zeta _2 + C \zeta _2^2| +1 -|\zeta _2| {}^2
\end{equation*}
with \begin{equation}\label{EQN45}
    A=\frac{5 \zeta _1^4+2 \zeta _1^2-4}{18 \zeta _1 \left(1-\zeta _1^2\right)},\quad B=-\frac{2}{9 \zeta _1}(4-\zeta _1^2), \quad C= -\frac{1}{9 \zeta _1}(8+\zeta _1^2).
\end{equation}
In light of Theorem \ref{l7} and the expressions of $A,B$ and $C$ given in \eqref{EQN45}, we divide our result in two cases based on the value of $AC.$ \\
\textbf{I.} Observe that $A C$ is non-negative over the interval $Y=(0,\zeta^{*}],$ where $\zeta^{*}=(1/5)\sqrt{\sqrt{21}-1}.$ Since $|B| -2 (1-| C |)=\frac{8}{3 \zeta _1}-2 \geq 0,$ for each $\zeta_{1}\in Y,$ therefore by applying Lemma \ref{l7} to
\eqref{EQN55} yields the following inequality 
\begin{align*}
    |\mathcal{M}|&\leq \frac{1}{8} \zeta _1 (1-\zeta _1^2) (\left| A\right| +\left| B\right| +\left| C\right| )= \frac{1}{48} (12-12 \zeta _1^2-\zeta _1^4):=S_{1}(\zeta_{1}).
\end{align*}
It is clearly evident that $S_{1}(x)$ is a decreasing function of $x,$ so $S_{1}(x)\to 1/4$ whenever $x\to 0.$ Thus for this case $|\mathcal{M}|\leq 1/4.$ \\
\textbf{II.} Let $Y^{*}=Y^{c}\cap [0,1).$ Clearly $AC<0$ for each $\zeta_{1}\in Y^{*}.$ For our suitability, we divide the computations in five subcases.\\
\indent \textbf{A.} The expression in \eqref{EQN56} is true for each $\zeta_{1}\in Y^{*},$  
\begin{equation}\label{EQN56}
    S_{1}(x):=-4 A C \left(\frac{1}{C^2}-1\right) - B^2 = \frac{2 (\zeta _1^4-106 \zeta _1^2-12)}{27 (8 + \zeta _1^2)} \leq 0.
\end{equation}
provided $\zeta _1^4-106 \zeta _1^2-12 \leq 0.$
Additionally, from Case \textbf{I} we have  $|B|-2(1-|C|)=:S_{2}(x)\geq 0.$ Thus we conclude that $S_{1}(Y^{*})\cap S_{2}(Y^{*})$ is an empty set. \\
\indent 
\textbf{B.} Moreover for $\zeta_{1}\in Y^{*},$ it is easy to verify that 
\[
 \begin{aligned}
    \left\{
    \begin{array}{ll}
        S_{3}(\zeta_{1}) := 4 ( 1 + | C |)^2 = \dfrac{4}{81 \zeta _{1}^{2}} (\zeta _1^{2}+9\zeta _1+8)^2 > 0 \\ \\
          S_{4}(\zeta_{1}) :=   - 4 A C \left(\dfrac{1}{C^{2}}-1\right)= \dfrac{2 (\zeta _1^2-64) (5 \zeta _1^4+2 \zeta _1^2-4)}{81 \zeta _1^2 (8 + \zeta _1^2)}\leq 0.
    \end{array}
    \right.
    \end{aligned}\]

\noindent Therefore 
$B^{2}=2 (\zeta _1^2 - 4)/9 \zeta _1  >  \min\left\{S_{3}(\zeta_{1}),S_{4}(\zeta_{1})\right\}=S_{4}(\zeta_{1}).$ This leads us to the next case.\\
\indent \textbf{C.}
Consider the following expression
\begin{align*}
    |C|(|B| + 4|A| - |A B|) = \frac{S_{5}(\zeta_{1})}{81 \zeta _1^3 (1-\zeta _1^2)},
\end{align*} where $S_{5}(x):=2 x^8+10 x^7+11 x^6+84 x^5-90 x^4+24 x^3+52 x^2-64 x+16.$ Observe that for each $x\in Y^{*},$ the polynomial $S_{5}(x)\leq 0$ is true if and only if $x^2 (2 x^6+21 x^4+76)\leq 6 (8+x^4).$ Infact 
 \[\displaystyle{\min_{x\in Y^{*}}(x^{2}(2 x^6+21 x^4+76))= 36/625(329 \sqrt{21}-419) \geq \max_{x\in Y^{*}}(6(8+x^{4}))=54}.\] Thus  for each $x\in Y^{*},$ $S_{5}(x)$ must be positive. \\
\indent\textbf{D.} Finally the following inequality 
\begin{align*}
|A B| - |C|(|B| - 4|A| ) &= -\frac{1}{81 \zeta _1^3 (1-\zeta _1^2)}(2 \zeta _1^8-10 \zeta _1^7-5 \zeta _1^6-68 \zeta _1^5 -10 \zeta _1^4-104 \zeta _1^3\\& \quad-12 \zeta _1^2+128 \zeta _1+16) \leq 0,
\end{align*}
is equivalent to 
\[\zeta _1^2 (10 \zeta _1^5+5 \zeta _1^4+68 \zeta _1^3+10 \zeta _1^2+104 \zeta _1+12)\leq 2 (\zeta _1^8+64 \zeta _1+8),\] provided
$\zeta^{*}<\zeta_{1}\leq \sqrt{\sqrt{249}-15}.$
Note that the function  $R(x)$ defined below is a decreasing function of $x$ $\left(\zeta^{*}<x\leq \sqrt{\sqrt{249}-15}\right).$ Thus Lemma \ref{l7} together with \eqref{EQN55}, gives
\begin{align*}
    |\mathcal{M}| &\leq \frac{1}{8} \zeta _1 (1-\zeta _1^2) (-\left| A\right| +\left| B\right| +\left| C\right|) \\&= \frac{2 \zeta _1^5-5 \zeta _1^4-34 \zeta _1^3-2 \zeta _1^2+32 \zeta _1+4}{144 \zeta _1} =: R(\zeta_{1}) \\ \quad &
    \leq \frac{1}{600} (169-29 \sqrt{21}) \approx 0.0601755.
    \end{align*}
\indent \textbf{E.} Further for $\sqrt{\sqrt{249}-15}<\zeta _1<1,$ inequality \eqref{EQN55} and Lemma \ref{l7}, yields 
 \begin{align*}
    |\mathcal{M}| &\leq  \frac{1}{8} \zeta _1 (1-\zeta _1^2) \left((\left| A\right| +\left| C\right| ) \sqrt{1-\frac{B^2}{4 A C}}\right) \\&= \frac{1}{16 \sqrt{3}}(2-\zeta _1^2)^2 \sqrt{\frac{\zeta _1^2 \left(\zeta _1^4+20 \zeta _1^2-12\right)}{5 \zeta _1^6+42 \zeta _1^4+12 \zeta _1^2-32}}\\& \leq 
   \frac{1}{8} (3 \sqrt{249}-47) \approx  0.0424002 .
    \end{align*} 
Based on all the above cases, we arrive at the required bound. Infact on substituting $\zeta_{1}=0, \zeta_{2}=\zeta_{3}=1$ in \eqref{EQN30}, equality in \eqref{EQN31} is achieved. Clearly $\tilde{f}\in\mathcal{A}$ defined in \eqref{EQN47} belongs to $\mathcal{F}_{2}$ such that
$(1 - z^{2})\tilde{f}'(z)=(1+z^{2})/(1-z^{2})$ and it works as the extremal function . 
\end{proof}
Here below, we provide some Caratheod\'{o}ry functions that will be relevant for our onward study.
\begin{equation}\label{EQN38}
    \begin{aligned}
    p_{i}(z)=\left\{
    \begin{array}{lll}
        \dfrac{1+0.0605378 z+z^2}{1-z^2} , & i=1 \\ \\
          \dfrac{1-z^2}{1-0.747551 z+z^2} , & i=2 \\ \\
         \dfrac{1+0.261084 z+0.261084 z^2+z^3}{1-0.501762 z+0.501762 z^2-z^3}, & i=3.
    \end{array}
    \right.
    \end{aligned}
\end{equation}

\begin{theorem}
Suppose $X(x)=-48 x^4-96 x^3-392 x^2+24 x+357$ and $F_{f}$ be given by \eqref{EQN26}. 
If $f\in\mathcal{F}_{1},$ then sharp bound on Hankel determinant for $F_{f}$ is given by
\begin{align}\label{EQN33}
|H_{2,1}(F_{f})| \leq X(\eta)/2304,
\end{align}
 where $\eta\approx 0.0302$ is the unique real root of the equation $X'(x)=0.$  This  result is sharp due to the extremal function
\begin{equation}\label{EQN46}
\tilde{f}(z)=\int_{0}^{z}p_{1}(t)/(1-t)dt,
\end{equation}
where $p_{1}(z)$ is defined in \eqref{EQN38}.
\end{theorem}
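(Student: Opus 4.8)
The plan is to follow exactly the template established in the preceding two theorems, adapting it to the class $\mathcal{F}_{1}$. First I would start from the defining relation $(1-z)f'(z)=p(z)$ for some $p\in\mathcal{P}$ of the form \eqref{EQN53}, and equate Taylor coefficients to solve for $a_2,a_3,a_4$ in terms of $c_1,c_2,c_3$. Substituting these into the identity $H_{2,1}(F_f)=\frac{1}{4}(a_2a_4-a_3^2+\frac{1}{12}a_2^4)$ gives a polynomial $\mathcal{N}$ in $c_1,c_2,c_3$. Invoking the invariance under rotation I may assume $c_1\in[0,2]$, and then apply Lemma~\ref{l6} to rewrite $\mathcal{N}$ entirely in terms of $\zeta_1\in[0,1]$ and $\zeta_2,\zeta_3\in\overline{\mathbb{D}}$.

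Next I would dispose of the boundary cases $\zeta_1=0$ and $\zeta_1=1$ by direct evaluation, and for $\zeta_1\in(0,1)$ use $|\zeta_3|\le 1$ to bound the $\zeta_3$-term and reduce the problem to estimating $\frac{1}{8}\zeta_1(1-\zeta_1^2)\,\Psi(A,B,C)$, where $\Psi(A,B,C):=|A+B\zeta_2+C\zeta_2^2|+1-|\zeta_2|^2$ and $A,B,C$ are explicit rational functions of $\zeta_1$ read off from $\mathcal{N}$. At this point Lemma~\ref{l7} is the engine: I would compute the sign of $AC$ to split into the two principal regimes, then within each subinterval of $(0,1)$ check the threshold inequalities (the conditions on $|B|$ versus $2(1-|C|)$, the comparison of $B^2$ with $4(1+|C|^2)$ and $-4AC(C^{-2}-1)$, and the three sub-branches defining $R(A,B,C)$) to determine which branch of $Y(A,B,C)$ applies on each piece. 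Each branch yields an explicit one-variable function of $\zeta_1$ whose maximum over the relevant subinterval I would locate. The novelty here, signalled by the statement itself, is that unlike the two previous theorems the maximum is \emph{not} attained at an endpoint: it occurs at the interior critical point $\eta\approx 0.0302$ of the quartic $X(x)=-48x^4-96x^3-392x^2+24x+357$, so one branch produces $X(\zeta_1)/2304$ and I must verify $X'(\eta)=0$ gives the global maximum $X(\eta)/2304$, checking it dominates the values coming from every other branch and subinterval.

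The main obstacle I anticipate is precisely this branch-and-interval bookkeeping: because the extremal $\zeta_1$ is interior, I cannot simply argue monotonicity to an endpoint as in Theorem~\ref{EQN37}, so I must show that on the subinterval carrying the maximizing branch the relevant function of $\zeta_1$ is unimodal with a single interior critical point, and that this critical value exceeds the suprema on all neighbouring branches. Establishing that $X'(x)=0$ has a unique real root in the admissible range, and that it is a maximum rather than a saddle of the piecewise-defined bound, will require careful analysis of the quartic and the thresholds separating the Lemma~\ref{l7} cases.

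Finally, for sharpness I would exhibit the extremal function \eqref{EQN46}, namely $\tilde{f}(z)=\int_0^z p_1(t)/(1-t)\,dt$ with $p_1$ as in \eqref{EQN38}. The Carath\'eodory function $p_1$ is built, via \eqref{EQN35}, from the specific parameter values corresponding to $\zeta_1=\eta$ (with $\zeta_2$ on $\mathbb{T}$), so that $(1-z)\tilde{f}'(z)=p_1(z)$ places $\tilde{f}\in\mathcal{F}_1$ and forces equality in \eqref{EQN33}; I would verify by direct substitution that this choice reproduces the bound $X(\eta)/2304$.
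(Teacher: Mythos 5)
Your plan matches the paper's proof essentially step for step: the same coefficient computation from $(1-z)f'(z)=p(z)$, the same reduction via Lemma~\ref{l6} and the bound $|\zeta_3|\le 1$ to a functional $\Psi(A,B,C)$ handled by Lemma~\ref{l7}, the same branch-by-branch elimination culminating in the interior maximum of $X(x)/2304$ at $\eta$, and the same sharpness construction via the extremal Carath\'eodory function with $\zeta_1=\eta$. The only detail to correct when you carry it out is the prefactor: since $a_2=\tfrac12(1+c_1)$ for $\mathcal{F}_1$, the reduction reads $|\mathcal{N}|\le\tfrac{1}{16}(1+2\zeta_1)(1-\zeta_1^2)\,\Psi(A,B,C)$ rather than $\tfrac18\zeta_1(1-\zeta_1^2)\,\Psi(A,B,C)$ (the latter would vanish at $\zeta_1=0$, incompatible with a maximum near $\zeta_1\approx 0.03$), and in fact $AC<0$ throughout $[0,1)$, so only the second part of Lemma~\ref{l7} is ever invoked.
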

\begin{proof}
For each $f\in\mathcal{F}_{1},$ there exists a function $p\in\mathcal{P}$ defined in \eqref{EQN53} satisfying $(1-z)f'(z) = p(z).$ On comparing coefficients of like power terms, we obtain
\[a_{2}= \frac{1}{2} \left(1 + c_1\right), \quad a_{3}=\frac{1}{3} \left(1+c_1+c_2\right), \quad a_{4} = \frac{1}{4} \left(1+c_1+c_2+c_3\right).\]
With the assumption that $c_{1}\in [0,2],$ and applying Lemma \ref{l6}, we obtain,
\begin{align}
 |\mathcal{N}|:=|\gamma_{1}\gamma_{3} - \gamma_{2}^{2}| &= \frac{1}{2304}|72 (1+2 \zeta _1)(1+2 \zeta _3 (1-| \zeta _2| {}^2)+2 \zeta _1 (1+2 \zeta _2-\zeta _2^2) + 2 \zeta _2\nonumber \\& \quad +2\zeta _1^3 (1-\zeta _2)^2 -2 \zeta _1^2 (\zeta _3 (1-| \zeta _2| {}^2)+\zeta _2-1))+3 (1+2 \zeta _1)^4 \nonumber \\& \quad-64 (1+2( \zeta _1 + \zeta _2) + 2 \zeta _1^2 (1-\zeta _2) )^{2}|  \label{EQN32}.
\end{align}

\noindent \textbf{I.} In equation \eqref{EQN32}, on  substituting $\zeta_{1}=1,$ we get $|\mathcal{N}|=155/2304\approx 0.067274.$ \\
\textbf{II.} Since $|\zeta_{3}|\leq 1,$ then for $\zeta_{1}\in [0,1),$ we obtain 
\begin{equation}\label{EQN57}
|\mathcal{N}| \leq \frac{1}{16} (1+2 \zeta _1) (1-\zeta _1^2)\Psi(A,B,C), 
\end{equation} 
where \[\Psi(A,B,C):=|A +B \zeta _2 + C \zeta _2^2|+ 1-| \zeta _2|{}^2\]
with \[A:=\frac{11+56 \zeta _1-8 \zeta _1^2+16 \zeta _1^3+80 \zeta _1^4}{144 \left(1 + 2 \zeta _1\right) \left(1-\zeta _1^2\right)},\quad  B:=\frac{4\zeta _1^2+4\zeta _1-7}{9\left(1 + 2 \zeta _1\right)}\] and \[C:=-\frac{2 \zeta _1^2+9 \zeta _1+16}{9 (1+2 \zeta _1)}.\]
Based on the range of $\zeta_{1}$ and the values of $A,B$ and $C$ defined above, we find that $A C < 0.$ Further as a consequence of Lemma \ref{l7}, we split our discussions into the following subcases.\\
\indent  \textbf{A.} For $\zeta_{1}\in[0,1),$ consider the expression,
    \[\mathcal{V}_{1}:=-4 A C \left(\frac{1}{C^{2}}- 1\right) - B^{2} = \frac{T(\zeta_{1})}{108 \left(1+2 \zeta _1\right) \left(2 \zeta _1^2+9 \zeta _1+16\right)},\] where $T(x)=32 x^5+336^4-2720 ^3+856 x^2+846 x-1687.$  It is easy to verify that  $\max_{x\in[0,1)}T(x)=-1687,$ thus $T(x)\leq 0.$ Suppose $\zeta^{*}=(1/2)(2\sqrt{2}-1),$ then  for $\zeta_{1}\in W_{1}\cup W_{1}^{*},$ where $W_{1}:=[0,\zeta^{*}]$ and $W_{1}^{*}:=W_{1}^{c}\cap [0,1),$ define 
    \[\mathcal{V}_{2}:=|B| - 2 (1-|C| )=\begin{cases}
      \dfrac{21-22 \zeta _1}{18 \zeta _1+9},  & \zeta_{1} \in W_{1}, \\ & \\
      \dfrac{8 \zeta _1^2-14 \zeta _1+7}{18 \zeta _1+9}, & \zeta_{1} \in W_{1}^{*}.
    \end{cases}\]
From the expression of $\mathcal{V}_{2},$ one can observe that, if $\zeta_{1}\in W_{1},$ then $\mathcal{V}_{2}\leq 0$ leads to $\zeta _1\geq 21/22,$ this is not true. Clearly for $\zeta_{1}\in W_{1}^{*},$ it is easy to verify that $\mathcal{V}_{2}>0.$ Thus $\mathcal{V}_{1}\cap \mathcal{V}_{2}$ is an empty set. Therefore according to Lemma \ref{l7}, this case fails to hold for any value of $\zeta_{1}\in [0,1).$ \\
\indent \textbf{B.} For $\zeta_{1}\in [0,1),$ simple computations reveal that
\[\mathcal{V}_{3}:=4 (1+|C|)^2  = \frac{4 (2 \zeta _1^2+27 \zeta _1+25)^2}{81 (1+2 \zeta _1)^2}>0\] and 
\[\mathcal{V}_{4}:=-4 A C \left(\frac{1}{C^{2}}-1\right) = \frac{(4 \zeta _1^2+36 \zeta _1-175) (80 \zeta _1^4+16 \zeta _1^3-8 \zeta _1^2+56 \zeta _1+11)}{324 (2 \zeta _1+1)^2 (2 \zeta _1^2+9 \zeta _1+16)} < 0.\] Above inequalities yield, 
\begin{align*}
   B^{2}=\frac{(7-4 \zeta _1-4 \zeta_1^2)^2}{81 \left(2 \zeta _1+1\right){}^2}&>\min\left\{\mathcal{V}_{3},\mathcal{V}_{4}\right\} =\mathcal{V}_{4}.
\end{align*}
\indent \textbf{C.} Furthermore for $\zeta_{1}\in W_{1}\cup W_{1}^{*},$ we have the following expression 
\[
\mathcal{V}_{5}:= \left|C \right|(\left| B \right| + 4 \left| A\right| )-\left| A B\right|=
\begin{cases}
   \dfrac{Q_{1}(\zeta_{1})}{1296 \left(1 + 2 \zeta _1\right)^2 \left(1-\zeta _1^2\right)}, & \zeta_{1}\in W_{1}, \\ & \\
 \dfrac{Q_{2}(\zeta_{1})}{1296 \left(1 + 2 \zeta _1\right)^2 \left(1-\zeta _1^2\right)},  & \zeta_{1}\in W_{1}^{*}\cup \{\zeta^{*}\},  
\end{cases}\]
where $Q_{1}(x):=1088 x^6+4096 x^5+6352 x^4+576 x^3-1252 x^2+3616 x+2419 $ and $Q_{2}(x):=192 x^6+1920 x^5+4912 x^4+1792 x^3+4436 x^2+4344 x-1011.$ A computation reveals that $Q_{1}'(x)$ is never zero on $W_{1}$ and $Q_{1}'(0)=3616,$  thus $ \mathcal{V}_{5}\geq Q_{1}(0)/1296>0.$ Similarly when $\zeta_{1}\in W_{1}^{*},$ then $\mathcal{V}_{5}\to Q_{2}(\zeta^{*})/(8 \sqrt{2}-10)>0$ as $\zeta_{1} \to \zeta^{*}.$ Therefore in view of Lemma \ref{l7}, this case fails to hold any $\zeta_{1}\in[0,1).$\\
\indent \textbf{D.} Consider the expression
\[\mathcal{V}_{6}:=|A B| - |C|(|B| - 4 |A|) = \frac{Q_{2}(\zeta_{1})}{1296 \left(1+2 \zeta _1\right){}^2 \left(1-\zeta _1^2\right)}.\] 
It can be easily verified that $Q_{2}'(x)$ is non-vanishing over the range $[0,1).$ Infact $Q_{2}(0)=-1011$ and $Q_{2}(1/2)=2864,$ thus by intermediate value property their exists $\gamma\in (0,1/2)$ such that $Q_{2}(\gamma)=0.$ Infact $Q_{2}(\gamma)=0,$ provided $\gamma \approx 0.190991.$ Thus $Q_{2}(x)\leq 0$ for each $x\in[0,\gamma],$ this leads to $\mathcal{V}_{6}\leq Q_{2}(\gamma)/(1296(1+2\gamma)^{2}(1-\gamma^{2}))= 0.$ Consequently, on applying Lemma \ref{l7} to inequality \eqref{EQN57}, we have 
\begin{align*}
|\mathcal{N}| &\leq
\frac{1}{16} \left(1+2 \zeta _1\right) (1-\zeta _1^2) (\left| B\right| +\left| C\right| -\left| A\right|)\leq \phi(\eta) \quad (\eta \approx 0.0302689),
\end{align*}
where \[\phi(x):=\frac{X(x)}{2304}=\frac{-48 x^4-96 x^3-392 x^2+24 x + 357}{2304} \quad 0\leq x \leq \gamma.\]  Observe that for $0\leq x\leq \gamma,$
\[\phi'(x) = \frac{-192 x^3-288 x^2-784 x+24}{2304}=0, \] holds true only if $x= \eta < \gamma,$ also $\phi''(\eta)<0.$ Consequently, this leads us to the inequality $\phi(x)\leq   \phi(\eta) \approx 0.155106.$\\
\indent \textbf{E.} Furthermore, for $\gamma < \zeta_{1} < 1,$ from  \eqref{EQN57} and Lemma \ref{l7},  it follows that
\begin{align*}
|\mathcal{N}| & \leq \frac{1}{16} \left(1+2 \zeta _1\right) (1-\zeta _1^2) (\left| A\right| +\left| C\right| ) \sqrt{1-\frac{B^2}{4 A C}} \leq \psi(\gamma) \quad (\gamma \approx 0.190991), 
\end{align*}
where \begin{align*}
    \psi(x)&:= \frac{1}{768} (48 x^4-128 x^3-232 x^2+200 x+267)\\&\quad \times \sqrt{\frac{32 x^6+208 x^5+544 x^4+216 x^3+14 x^2+257 x+124}{480 x^6+2256 x^5+4224 x^4+888 x^3+1194 x^2+2985 x+528}}.
\end{align*}
As $\psi'(x)=0$ has no solution in $(\gamma, 1),$  also $\psi'(x) \to -(3079/15552)<0$ when $x \to 1.$  Thus $\psi(x)$ is a decreasing function and hence $\psi(x)\leq \psi(\gamma) \approx 0.150413.$
In a nutshell, cases \textbf{I}-\textbf{II}, leads us to inequality \eqref{EQN33}. In Lemma \ref{l6}, on substituting $\zeta_{1}=\eta,\zeta_{2}=1$ and $\zeta_{3}=0$ in \eqref{EQN36} we obtain $p_{1}\in\mathcal{P}$ defined in \eqref{EQN38}. Thus  $\tilde{f}\in\mathcal{A}$ defined in \eqref{EQN46} belongs to $\mathcal{F}_{1}$ and serves as the extremal function.   
\end{proof} 

\begin{theorem}
Suppose $X(x)=-176 x^4-224 x ^3-264 x^2+328 x+469.$ Let $F_{f}$ be given by \eqref{EQN26}. 
If $f\in\mathcal{F}_{3},$ then sharp bound on Hankel determinant for $F_{f}$ is given by
\begin{equation}\label{EQN61}
    |H_{2,1}(F_{f})|\leq X(\eta)/2304,
\end{equation} where $\eta\approx 0.3737$ is the unique real root of the equation $X'(x)=0.$ This result is sharp for the function 
\[f(z)=\int_{0}^{z}p_{2}(t)/(1-t+t^{2}) dt,\]
where $p_{2}(z)$ is given by \eqref{EQN38}.
\end{theorem}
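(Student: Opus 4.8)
The plan is to follow the template established in the three preceding theorems, adapting it to the defining relation of $\mathcal{F}_3$. First I would write $(1-z+z^2)f'(z)=p(z)$ for some $p\in\mathcal{P}$ as in \eqref{EQN53}, and, since $|H_{2,1}(F_f)|$ and the class $\mathcal{P}$ are both rotation invariant, assume $c_1\in[0,2]$. Comparing coefficients of $z$, $z^2$ and $z^3$ gives $a_2=(1+c_1)/2$, $a_3=(c_1+c_2)/3$ and $a_4=(c_2+c_3-1)/4$. Substituting these into $\gamma_1\gamma_3-\gamma_2^2=\tfrac14\bigl(a_2a_4-a_3^2+\tfrac1{12}a_2^4\bigr)$ and then replacing $c_1,c_2,c_3$ by the Libera--Zlotkiewicz expressions \eqref{EQN21}--\eqref{EQN23} of Lemma \ref{l6} yields a polynomial in $\zeta_1\in[0,1]$ and $\zeta_2,\zeta_3\in\overline{\mathbb{D}}$ that is affine in $\zeta_3$.

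Next I would settle the endpoints $\zeta_1=0$ and $\zeta_1=1$ by direct substitution. For $\zeta_1\in(0,1)$, the only term carrying $\zeta_3$ has coefficient $\tfrac1{16}(1+2\zeta_1)(1-\zeta_1^2)(1-|\zeta_2|^2)\zeta_3$, so the triangle inequality together with $|\zeta_3|\le1$ collapses the functional into the form
\[
|\gamma_1\gamma_3-\gamma_2^2|\le\frac1{16}(1+2\zeta_1)(1-\zeta_1^2)\,\Psi(A,B,C),\qquad \Psi(A,B,C):=|A+B\zeta_2+C\zeta_2^2|+1-|\zeta_2|^2,
\]
for explicit rational functions $A,B,C$ of $\zeta_1$ (the analogues of \eqref{EQN45} and of the coefficients in \eqref{EQN57}). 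I would then read off the sign of $AC$ on $(0,1)$ and apply Lemma \ref{l7} branch by branch, verifying in each regime the chain of comparisons---$|B|\gtrless2(1-|C|)$, $B^2\gtrless-4AC(C^{-2}-1)$, and the three sub-conditions inside $R(A,B,C)$---exactly as in Cases I--II of the earlier proofs.

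The main obstacle, and the only part that is not purely mechanical, is this branch selection: each comparison reduces to a polynomial or radical inequality in $\zeta_1$ whose sign has to be pinned down on the relevant subinterval, and the bookkeeping of which branch governs where is delicate. I expect the decisive branch to be the one giving $-|A|+|B|+|C|$, so that $\tfrac1{16}(1+2\zeta_1)(1-\zeta_1^2)(-|A|+|B|+|C|)$ simplifies to $X(\zeta_1)/2304$; the complementary regime (the $R(A,B,C)$ radical branch) should again yield a strictly smaller value, as happened for $\mathcal{F}_1$. Maximizing the quartic $X$ over the admissible interval then produces the unique interior critical point $\eta\approx0.3737$ with $X''(\eta)<0$, giving the stated bound $X(\eta)/2304$.

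Finally, for sharpness I would observe that in \eqref{EQN35} the choice $\zeta_1=\eta$, $\zeta_2=-1$ gives $p(z)=(1-z^2)/(1-2\eta z+z^2)=p_2(z)$ of \eqref{EQN38}, so that $f(z)=\int_0^z p_2(t)/(1-t+t^2)\,dt$ lies in $\mathcal{F}_3$ and attains equality in \eqref{EQN61}.
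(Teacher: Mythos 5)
Your setup, the reduction to $|\mathcal{G}|\le\tfrac1{16}(1+2\zeta_1)(1-\zeta_1^2)\Psi(A,B,C)$, and the sharpness verification all match the paper; in particular your use of \eqref{EQN35} with $\zeta_1=\eta$, $\zeta_2=-1$ to produce $p_2(z)=(1-z^2)/(1-2\eta z+z^2)$ is correct (and equality does hold there, since $\zeta_2=-1$ kills the $\zeta_3$-term and $A<0<B$, $C<0$ give $|A-B+C|=|A|+|B|+|C|$).

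The error is in your branch prediction, and it is not a cosmetic one. For $\mathcal{F}_3$ one finds
\[
A=\frac{80\zeta_1^4+16\zeta_1^3-40\zeta_1^2-120\zeta_1-69}{144(1+2\zeta_1)(1-\zeta_1^2)}<0,\qquad
C=-\frac{2\zeta_1^2+9\zeta_1+16}{9(1+2\zeta_1)}<0
\]
on all of $[0,1)$, so $AC>0$ everywhere and only part (1) of Lemma \ref{l7} is ever in play; the quantity $R(A,B,C)$ and its branch $-|A|+|B|+|C|$ never arise. Moreover $|B|-2(1-|C|)=(8\zeta_1^2-14\zeta_1+23)/(9(1+2\zeta_1))>0$ (the numerator has negative discriminant), so the single branch $Y=|A|+|B|+|C|$ governs the entire interval and the proof collapses to maximizing one quartic --- there is no case analysis at all, unlike $\mathcal{F}_1$. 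The identity you assert, namely that $\tfrac1{16}(1+2\zeta_1)(1-\zeta_1^2)(-|A|+|B|+|C|)$ equals $X(\zeta_1)/2304$, is false: that expression works out to $(-16\zeta_1^4-192\zeta_1^3-344\zeta_1^2+88\zeta_1+331)/2304$, whose maximum is roughly $0.146$, not the stated $X(\eta)/2304\approx0.234$. The correct identity is $\tfrac1{16}(1+2\zeta_1)(1-\zeta_1^2)(|A|+|B|+|C|)=X(\zeta_1)/2304$. Since your plan does say you would first read off the sign of $AC$, a careful execution would self-correct, but as written the decisive step of the argument rests on a wrong formula and a phantom case structure.
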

\begin{proof}
Suppose $f\in\mathcal{A}$ such that $(1-z+z^{2})f'(z)=p(z),$ where $p\in\mathcal{P}$ is defined in \eqref{EQN53}. Proceeding in the similar approach, we have
\[a_{2}=\frac{1}{2}(1+c_1),\quad a_{3}=\frac{1}{3} (c_1+c_2),\quad a_{4}=\frac{1}{4} (c_2+c_3-1)\]
The expression $\gamma_{1}\gamma_{3}-\gamma_{2}^{2}$  in terms of $\zeta_{i}'s$ $(i=1,2,3),$ becomes 
\begin{align}
   \mathcal{G}:= \gamma_{1}\gamma_{3}-\gamma_{2}^{2}&=\frac{1}{32} (1+2 \zeta _1) (2\zeta _1^3 (1-\zeta _2)^2 + 2  \zeta _1 \zeta _2(2-\zeta _2) +2 (\zeta _2 + \zeta _3)-1 \nonumber \\&\quad + 2 \zeta _1^2 (1 - \zeta _2-\zeta _3(1-| \zeta _2| {}^2))-2 \zeta _3 | \zeta _2| {}^2) +\frac{1}{768} (1+2 \zeta _1)^{4} \nonumber  \\ & \quad  - \frac{1}{9} (\zeta _1+\zeta _2+(1-\zeta _2) \zeta _1^2)^{2} \label{EQN58}.
\end{align}
\noindent \textbf{I.} In expression \eqref{EQN58} put $\zeta_{1}=1,$ then 
$|\mathcal{G}|=133/2304\approx 0.0577257.$ \\
\textbf{II.} If $\zeta_{1}\in [0,1),$ then from \eqref{EQN58} together with $|\zeta_{3}|\leq 1,$ gives
\begin{equation}\label{EQN59}
    |\mathcal{G}| \leq \frac{1}{16} (1+2 \zeta _1) (1-\zeta _1^2) \Psi(A,B,C),
\end{equation}
where  \[\Psi(A,B,C):=|A+B \zeta _2+C \zeta _2^2| + 1-| \zeta _2|^{2}\]
with
\[A:=\frac{80 \zeta _1^4+16 \zeta _1^3-40 \zeta _1^2-120 \zeta _1-69}{144 \left(1+2 \zeta _1\right) \left(1-\zeta _1^2\right)},\quad  B:=\frac{4 \zeta _1^2+4 \zeta _1+9}{9(1+2 \zeta _1)}\] and
\[C:=-\frac{2 \zeta _1^2+9 \zeta _1+16}{9(1+2 \zeta _1)}.\]

\noindent The expressions of $A,B$ and $C$ defined above leads to $AC>0.$ Now the inequality 
\begin{equation}\label{EQN43}
    | B| -2 (1-|C| )= \frac{8 \zeta _1^2-14 \zeta _1+23}{9(1+2 \zeta _1)}< 0,
\end{equation}
is equivalent to  $8 \zeta _1^2-14 \zeta _1+23 < 0.$ Since $\min_{\zeta_{1}\in[0,1)} 8 \zeta_1^2-14 \zeta _1+23 =135/8>0,$ then inequality \eqref{EQN43} is false for each $\zeta_{1}\in[0,1).$ Therefore $| B| -2 (1-|C| )\geq 0$ for each $0\leq \zeta_{1}<1.$
An application of Lemma \ref{l7} to inequality \eqref{EQN59}, gives
\begin{align}\label{EQN60}
    |\mathcal{G}|&\leq \frac{1}{16} (1+2 \zeta _1) (1-\zeta _1^2) (| A| +| B| +| C|) \leq S(\eta) \quad (\eta \approx  0.373776),
\end{align}  
where \begin{equation}
    S(x):=\frac{X(x)}{2304}=\frac{-176 x^4-224 x ^3-264 x^2+328 x+469}{2304}.
    \end{equation}
Note that $S'(x)$ vanishes at $x=\eta$ and $S''(\eta)<0,$ where $\eta \approx  0.373776$ is the only critical point of $S(x)$ in $[0,1).$ Hence \eqref{EQN60} determines the desired bound. On replacing $\zeta_{1}=\eta,\zeta_{2}= -1$ and $\zeta_{3}=0$ in \eqref{EQN36} we obtain $p_{2}\in\mathcal{P}$ defined in \eqref{EQN38}. As a result sharpness in \eqref{EQN61} is achieved by $\tilde{f}\in\mathcal{A}$ such that 
$(1-z+z^2)\tilde{f}'(z)=p_{2}(z).$ 
\end{proof}


\begin{theorem}
Suppose  $48 (17 + x) X(x)=(1 + x) (x^4+20 x^3-114 x^2+4 x+125).$  Let $F_{f}$ be given by \eqref{EQN26}. 
If $f\in\mathcal{F}_{4},$ then sharp bound on Hankel determinant for $F_{f}$ is given by 
\begin{equation}\label{EQN49}
|H_{2,1}(F_{f})|\leq X(\eta),
\end{equation} where $\eta \approx 0.381$ is the unique real root of the equation   $X'(x)=0.$ This result is sharp with extremal function 
\begin{equation}\label{EQN50}
    \tilde{f}(z)=\int_{0}^{z}p_{3}(t)/(1-t)^{2} dt,
\end{equation}
where $p_{3}(z)$ is defined in \eqref{EQN38}.
\end{theorem}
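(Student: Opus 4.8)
The plan is to mirror the arguments used for $\mathcal{F}_{1}$ and $\mathcal{F}_{3}$, adapted to the representation $(1-z)^{2}f'(z)=p(z)$ with $p\in\mathcal{P}$. First I would compare coefficients of like powers in $(1-z)^{2}f'(z)=p(z)$ to obtain
\[
a_{2}=\frac{2+c_{1}}{2},\qquad a_{3}=\frac{3+2c_{1}+c_{2}}{3},\qquad a_{4}=\frac{4+3c_{1}+2c_{2}+c_{3}}{4},
\]
substitute these into $\mathcal{H}:=\gamma_{1}\gamma_{3}-\gamma_{2}^{2}=\tfrac14\big(a_{2}a_{4}-a_{3}^{2}+\tfrac1{12}a_{2}^{4}\big)$, and then invoke Lemma \ref{l6} to write $2304\,\mathcal{H}$ as a polynomial in $\zeta_{1}\in[0,1]$ and $\zeta_{2},\zeta_{3}\in\overline{\mathbb{D}}$. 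Since $\mathcal{P}$ is rotation invariant I take $c_{1}\in[0,2]$, i.e.\ $\zeta_{1}\in[0,1]$. The boundary value $\zeta_{1}=1$ forces $c_{1}=c_{2}=c_{3}=2$ and gives $|\mathcal{H}|=1/12=X(1)$ directly, which will turn out to be strictly below the claimed maximum.

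For $\zeta_{1}\in[0,1)$ the variable $\zeta_{3}$ enters $2304\,\mathcal{H}$ only through the term $144(1+\zeta_{1})c_{3}$, whose $\zeta_{3}$-part equals $288(1+\zeta_{1})^{2}(1-\zeta_{1})(1-|\zeta_{2}|^{2})\zeta_{3}$. Bounding $|\zeta_{3}|\le1$ therefore yields
\[
|\mathcal{H}|\le\frac{(1+\zeta_{1})^{2}(1-\zeta_{1})}{8}\,\Psi(A,B,C),\qquad \Psi(A,B,C)=|A+B\zeta_{2}+C\zeta_{2}^{2}|+1-|\zeta_{2}|^{2},
\]
where a computation gives
\[
A=\frac{5\zeta_{1}^{4}+2\zeta_{1}^{3}-4\zeta_{1}^{2}+6\zeta_{1}+3}{18(1+\zeta_{1})^{2}(1-\zeta_{1})},\qquad
B=\frac{2(\zeta_{1}+3)(\zeta_{1}-1)}{9(1+\zeta_{1})},\qquad
C=-\frac{\zeta_{1}+8}{9}.
\]
The decisive simplification is $C=-(\zeta_{1}+8)/9$, so that $1+|C|=(\zeta_{1}+17)/9$; this is exactly the factor $(17+x)$ in the definition of $X$. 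Since the numerator of $A$ is increasing with value $3$ at $\zeta_{1}=0$, one has $A>0$ and $C<0$, hence $AC<0$ throughout $[0,1)$ and we fall under case $(2)$ of Lemma \ref{l7}.

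The core of the argument is the case split of Lemma \ref{l7}. I expect the operative branch to be the second one, $Y(A,B,C)=1+|A|+\tfrac{B^{2}}{4(1+|C|)}$, which is valid where $B^{2}<\min\{4(1+|C|^{2}),\,-4AC(C^{-2}-1)\}$. Using $C^{-2}-1=(1-\zeta_{1})(17+\zeta_{1})/(\zeta_{1}+8)^{2}$ this reduces to a single polynomial sign condition in $\zeta_{1}$ that holds on an interval $(\zeta_{*},1)$ containing $\eta$; there
\[
|\mathcal{H}|\le\frac{(1+\zeta_{1})^{2}(1-\zeta_{1})}{8}\left(1+|A|+\frac{9B^{2}}{4(\zeta_{1}+17)}\right),
\]
and clearing denominators turns the right-hand side into precisely $X(\zeta_{1})$ with $48(17+\zeta_{1})X(\zeta_{1})=(1+\zeta_{1})(\zeta_{1}^{4}+20\zeta_{1}^{3}-114\zeta_{1}^{2}+4\zeta_{1}+125)$. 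On the complementary interval $[0,\zeta_{*})$ the active branch is $R(A,B,C)=(|A|+|C|)\sqrt{1-B^{2}/(4AC)}$ (the two polynomial sub-cases of $R$, and the first branch of case $(2)$, being excluded by sign checks), and I would bound the resulting estimate below $X(\eta)$. Maximising $X$ on $[0,1)$, its derivative has the unique root $\eta\approx0.381$ with $X''(\eta)<0$, giving $|\mathcal{H}|\le X(\eta)$. Sharpness follows by taking $\zeta_{1}=\eta$, the interior maximiser $\zeta_{2}=B/\big(2(1+|C|)\big)\approx-0.087$, and $\zeta_{3}=1$ in \eqref{EQN36}, which produces $p_{3}$ of \eqref{EQN38} and the extremal $\tilde f$ in \eqref{EQN50}.

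The main obstacle I anticipate is the delicacy of the optimisation on $[0,\zeta_{*})$: there the $R$-branch estimate rises to within a whisker of $X(\eta)$ (numerically about $0.1838$ against $0.1839$), so the inequality ``$R$-branch value $\le X(\eta)$'' is genuinely tight and cannot be dispatched by a crude majorant. Locating the transition point $\zeta_{*}$ and verifying which branch of Lemma \ref{l7} is active on each piece of $[0,1)$ — the full bookkeeping of the sub-cases, exactly as in the $\mathcal{F}_{1}$ proof — is the bulk of the work.
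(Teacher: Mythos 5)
Your proposal is correct and follows essentially the same route as the paper: the same coefficient formulas, the same reduction via Lemma \ref{l6} and the bound $|\zeta_3|\le 1$ to $\frac{1}{8}(1-\zeta_1)(1+\zeta_1)^2\Psi(A,B,C)$ with identical $A,B,C$, the same case split under part (2) of Lemma \ref{l7} (second branch on $(\zeta^*,1)$ yielding $X(\zeta_1)$, the $R$-branch with the square-root expression on $[0,\zeta^*]$ bounded by $\approx 0.1838 < X(\eta)$), and the same extremal data $\zeta_1=\eta$, $\zeta_2\approx -0.0871$, $\zeta_3=1$. The paper's proof is exactly this bookkeeping, including your anticipated near-tightness of the $R$-branch estimate ($n\approx 0.183792$ versus $X(\eta)\approx 0.18390$).
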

\begin{proof}
Let $f\in\mathcal{A}$ satisfy $(1-z+z^{2})f'(z)=p(z),$ where where $p\in\mathcal{P}$ is defined in \eqref{EQN53}. Following the same procedure as before, we have
\[a_{2}=\frac{1}{2} \left(c_1+2\right),\quad a_{3}=\frac{1}{3} \left(2 c_1+c_2+3\right),\quad a_{4}=\frac{1}{4} \left(3 c_1+2 c_2+c_3+4\right).\]
Due to Lemma \ref{l6}, the expression $\gamma_{1}\gamma_{3}-\gamma_{2}^{2}$ in terms of $\zeta_{i}'s$ $(i=1,2,3),$ becomes 
\begin{align}\label{EQN44}
   \mathcal{H}:= \gamma_{1}\gamma_{3}-\gamma_{2}^{2}&=
    \frac{1}{144} (18 (1+\zeta _1)^2 (\zeta _1^2 (1-\zeta _2){}^2 - \zeta _1 (\zeta _2^2 -1 + \zeta _3( 1 - | \zeta _2| {}^2))\nonumber \\& \quad -\zeta _3 | \zeta _2| {}^2+2 \zeta _2+\zeta _3+2)+3 (1 + \zeta _1){}^4-4 (3+4 \zeta _1+2 \zeta _2 \nonumber \\& \quad - 2  \zeta _1^2 (\zeta _2-1)){}^2).
\end{align}

\noindent \textbf{I}:  Substitute $\zeta_{1}=1$ in \eqref{EQN44}, then $|\mathcal{H}|=1/12.$\\
\textbf{II}: Since $|\zeta_{3}|\in{\overline{\mathbb{D}}},$ then for $\zeta_{1}\in [0,1),$ we have
\begin{equation}\label{EQN62}
    |\mathcal{H}| \leq \frac{1}{8}(1-\zeta _1) (1+\zeta _1)^{2} \Psi(A,B,C),
\end{equation}
where  \[\Psi(A,B,C):= | A + B \zeta _2 + C \zeta _2^2| + 1-| \zeta _2|^{2}\]
with 
\[A:=\frac{5 \zeta _1^4+2 \zeta _1^3-4 \zeta _1^2+6 \zeta _1+3}{18 \left(1-\zeta _1\right) \left(1 + \zeta _1\right){}^2},\quad  B:=-\frac{2 \left(1-\zeta _1\right) \left(3 + \zeta _1\right)}{9 \left(1 + \zeta _1\right)}\] and \[C:=-\frac{1}{9} \left( 8 + \zeta _1\right).\] 

\noindent The expressions of $A,B$ and $C$ defined above, demonstrates that $AC<0.$ Moreover, based on the conditions in Lemma \ref{l7}, we now look into the following subcases. \\
\indent \textbf{A.} Note that the inequality 
\[| B| -2 (1-|C| )= \frac{4(1 - \zeta _1)}{9 (1 + \zeta _1)}\leq 0\]
is  valid if $\zeta _1\geq 1.$ This is false as $\zeta_{1}<1.$ Infact
\[-4 A C \left(\frac{1}{C^2}-1\right) - B^2 = \frac{2 (\zeta _1^5+21 \zeta _1^4-10 \zeta _1^3-2 \zeta _1^2+93 \zeta _1-31)}{27 (1 + \zeta _1){}^2 (8+\zeta _1)}\leq 0,\] provided $\zeta_{1}\in[0,\zeta^{*}],$ where $\zeta^{*}\approx 0.336931.$ Consequently, due to Lemma \ref{l7}, this case does not hold for each $\zeta_{1}\in [0,1).$  \\
\indent \textbf{B.} Note that the following inequalities 
\[ \mathcal{J}_{1}:=4 (1 + |C|)^2=\frac{4}{81} ( 17 + \zeta _1){}^2>0 \]and 
\[\mathcal{J}_{2}:=-4 A C \left(\frac{1}{C^2}-1\right) = \frac{2 (17 + \zeta _1) (5 \zeta _1^4+2 \zeta _1^3-4 \zeta _1^2+6 \zeta _1+3)}{81 (1 + \zeta _1){}^2 (8 + \zeta _1)}>0,\]
hold simultaneously if $\zeta_{1}> \zeta^{*}.$ Thus 
\begin{align*}
B^2=\frac{4 \left(1-\zeta _1\right){}^2 \left(3+\zeta _1\right){}^2}{81 \left(1+\zeta _1\right){}^2} & <\min \left\{\mathcal{J}_{1},\mathcal{J}_{2}\right\}=\mathcal{J}_{2}.
\end{align*}
Therefore in view of Lemma \ref{l7}, inequality \eqref{EQN62}, gives
\begin{align*}
    |\mathcal{H}| & \leq \frac{1}{8} \left(1-\zeta _1\right) \left(1+\zeta _1\right)^{2} \left(1+\left| A\right| +\frac{B^2}{4 (1 + \left| C\right|)}\right) \leq X(\eta) \quad (\eta \approx 0.381423),
\end{align*}
where \[X(x)=\frac{(1 + x) (x^4+20 x^3-114 x^2+4 x+125)}{48 (17 + x)}.\]
Since $x=\eta > \zeta^{*}$ is a unique real root of $X'(x)=0$ in $(\zeta^{*},1)$ and $X''(\eta)<0,$ where $\eta \approx 0.381423$. Then $X$  attains its maximum at $x=\eta.$ \\
\indent \textbf{C.} Further for $0\leq \zeta_{1} \leq \zeta^{*},$ simple computations reveal that 
\begin{align*}
|C| (|B| + 4 |A|) - |A B|&=\frac{17 \zeta _1^6+128 \zeta _1^5+137 \zeta _1^4-80 \zeta _1^3-17 \zeta _1^2+160 \zeta _1+87}{81 \left(1-\zeta _1\right) \left(1 + \zeta _1\right){}^3} \\& \geq m >0 \quad (m \approx 1.05323). 
\end{align*}
Similarly $\left| A B\right| -\left| C\right|  (\left| B\right| -4 \left| A\right| )>0.$ Therefore, Lemma \ref{l7}, together with \eqref{EQN62}, establishes the following
\begin{align*}
    |\mathcal{H}| & \leq \frac{1}{48} (3 \zeta _1^4-16 \zeta _1^3-18 \zeta _1^2+24 \zeta _1+19) \\& \times  \sqrt{\frac{\zeta _1^5+12 \zeta _1^4+8 \zeta _1^3-2 \zeta _1^2+3 \zeta _1+14}{15 \zeta _1^5+126 \zeta _1^4+36 \zeta _1^3-78 \zeta _1^2+153 \zeta _1+72}} \leq n \quad (n \approx 0.183792).
\end{align*}
By summarizing all the above cases, the required bound  can be established. Here equality in \eqref{EQN49} holds if we replace $\zeta_{1}= \eta,\zeta_
{2}\approx-0.0871127$ and $\zeta_{3}= 1$ in \eqref{EQN44}. On substituting these values of $\zeta_{i}$ $'s$ $(i=1,2,3)$ in \eqref{EQN36} we obtain $p_{3}\in\mathcal{P}$ defined in \eqref{EQN38}. Infact $\tilde{f}\in\mathcal{A}$ given in \eqref{EQN50} satisfies $(1-z)^{2}\tilde{f}'(z)=p_{3}(z).$ Hence $\tilde{f}\in\mathcal{F}_{4}$  serves as the extremal function. 
\end{proof}


\begin{thebibliography}{99}

\bibitem{Ali & Vasudevrao(2018)} Ali, M.F., Vasudevarao, A.: On logarithmic coefficients of some close-to-convex functions. Proc. Am. Math. Soc. {\bf 146}, 1131–1142 (2018)







\bibitem{Cantor(1963)} Cantor, D.G.: Power series with integral coefficients. Bull. Amer. Math. Soc. {\bf 69}, 362--366 (1963)







\bibitem{Caratheodory(1907)} Carath\'{e}odory, C.: \"{U}ber den Variabilit\"{a}tsbereich der Koeffizienten von Potenzreihen, die gegebene Werte nicht annehmen. Math. Ann. {\bf 64}, 95--115 (1907)





\bibitem{Cho & Lecko(2020)} Cho, N.E.,  Kowalczyk, B., Kwon, O.S., Lecko, A., Sim, Y.J.: On the third logarithmic coefficient in some subclasses of close-to-convex functions. Rev. R. Acad. Cienc. Exactas Fís. Nat. {\bf 114}(52), 1–-14 (2020)







\bibitem{Cho & Lecko(2019)} Cho, N.E., Kowalczyk B., Lecko, A.: Sharp bounds of some coefficient functionals over the class of functions convex in the direction of the imaginary axis. Bull. Aust. Math. Soc. {\bf 100}(1), 86--96 (2019)





\bibitem{Choi & Sugawa(2007)} Choi, J.H., Kim  Y.C., Sugawa, T.: A general approach to the Fekete-Szeg\"{o} problem. J. Math. Soc. Jpn. {\bf 59}(3), 707--727 (2007)






\bibitem{Dienes(1957)} Dienes, P.: The Taylor series: an introduction to the theory of functions of a complex variable. Dover Publications, Inc., New York (1957)





\bibitem{Ehrenborg(2000)} Ehrenborg, R.: The Hankel determinant of exponential polynomials. Amer. Math. Monthly. {\bf 107}(6), 557--560 (2000) 







\bibitem{Goel & Kumar(2019)} Goel, P., Kumar, S.: On sharp bounds of certain close-to-convex functions, arXiv:1907.13385v2




\bibitem{Goodman(1983)I} Goodman, A.W.: Univalent functions. Vol. I, Mariner Publishing Co., Inc., Tampa, FL (1983)





\bibitem{Goodman(1983)II} Goodman, A.W.: Univalent functions. Vol. II, Mariner Publishing Co., Inc., Tampa, FL (1983)





\bibitem{Hayman(1968)} Hayman, W.K.: On the second Hankel determinant of mean univalent functions. Proc. London Math. Soc. {\bf 18}, 77--94 (1968)





\bibitem{Janteng(2006)}  Janteng, A., Halim, S.A., Darus, M.: Coefficient inequality for a function whose derivative has a positive real part. JIPAM. J. Inequal. Pure Appl. Math. {\bf 7}(2), 1--5 (2006) 



\bibitem{Kaplan(1952)} Kaplan, W.: Close-to-convex schlicht functions. Michigan Math. J. {\bf 1}, 169--185 (1952)





\bibitem{Kowalczyk & Lecko(2014)} Kowalczyk B., Lecko, A.: The Fekete-Szeg\"{o} inequality for close-to-convex functions with respect to a certain starlike function dependent on a real parameter. J. Inequal. Appl. {\bf 2014}, 1--16 (2014)





\bibitem{Lecko(2022)} Kowalczyk, B., Lecko, A.: Second Hankel determinant of logarithmic coefficients of convex and starlike functions. Bull. Aust. Math. Soc. {\bf 105}(3), 458-467 (2022)






\bibitem{Pranav Kumar &  Vasudevarao(2018)} Kumar, U.P., Vasudevarao, A.: Logarithmic coefficients for certain subclasses of close-to-convex functions. Monatsh. Math. {\bf 187}(3), 543--563 (2018)






\bibitem{Virender(2021)} Kumar, V.: Moduli difference of successive inverse coefficients for certain classes of close-to-convex functions. Ricerche di Matematica (2021). https://doi.org/10.1007/s11587-021-00682-1







\bibitem{S & V Kumar(2022)} Kumar, S., Kumar, V.: Sharp estimates on the third order Hermitian-Toeplitz Determinant for Sakaguchi Classes. Commun. Korean Math. Soc. (2021). https://doi.org/10.4134/CKMS.c210332 

 



 


\bibitem{Hayman(2001)} Hayman, J.W.: The Hankel transform and some of its properties. J. Integer Seq. {\bf 4}, 1-11 (2001)







\bibitem{Lee & Ravi & Subramanian(2013)}  Lee, S.K.,  Ravichandran V., Supramaniam, S.: Bounds for the second Hankel determinant of certain univalent functions. J. Inequal. Appl. {\bf 2013}, 281-297 (2013)







\bibitem{Libera(1982)} Libera, R.J., Z\l otkiewicz, E.J.: Early coefficients of the inverse of a regular convex function. Proc. Amer. Math. Soc. {\bf 85}(2), 225--230 (1982)







\bibitem{Libera(1983)} Libera, R.J., Z\l otkiewicz, E.J.: Coefficient bounds for the inverse of a function with derivative in $\mathcal{P}$. Proc. Amer. Math. Soc. {\bf 87}(2), 251--257 (1983)







\bibitem{Milin(1977)}  Milin, I.M.: Univalent Functions and Orthonormal Systems, Izdat. “Nauka”, Moscow (1971) (in Russian); English transl., American Mathematical Society, Providence (1977)










\bibitem{Noonan & Thomas (1976)}  Noonan, J.W., Thomas, D.K.: On the second Hankel determinant of areally mean $p$-valent functions. Trans. Amer. Math. Soc. {\bf 223}, 337--346 (1976)












\bibitem{Noor(1992)} Noor,  K.I.: Higher order close-to-convex functions. Math. Jpn. {\bf 37}(1), 1--8 (1992)








\bibitem{Ozaki(1941)} Ozaki, S.: On the theory of multivalent functions. II. Sci. Rep. Tokyo Bunrika Daigaku. Sect. A. {\bf 4}, 45--87 (1941)





\bibitem{Pommerenke(1966)} Pommerenke, C.: On the coefficients and Hankel determinants of univalent functions. J. Lond. Math. Soc. {\bf 41}, 111--122 (1966) 





\bibitem{Pommerenke(1975)} Pommerenke, C.: Univalent Functions. Vandenhoeck \& Ruprecht, G\"{o}ttingen (1975)










\bibitem{Sakaguchi(1959)} Sakaguchi, K.: On a certain univalent mapping. J. Math. Soc. Jpn. {\bf 11}, 72--75 (1959)









\bibitem{Thomas(2016)}  Thomas, D.K.: On the logarithmic coefficients of close to convex functions. Proc. Amer. Math. Soc. {\bf 144}(4), 1681--1687 (2016)







\bibitem{D.K. Thomas(2018)} Thomas, D.K., Tuneski, N., Vasudevarao, A.: Univalent Functions: A Primer. De Gruyter Studies in Mathematics, {\bf 69}, De Gruyter, Berlin, Boston (2018)






	\end{thebibliography}
\end{document}